\newtheorem{theorem}{Theorem}
\newtheorem{lemma}[theorem]{Lemma}
\newtheorem{proposition}[theorem]{Proposition}
\newcommand{\N}{\mathbb{N}}
\newcommand{\Z}{\mathbb{Z}}
\newcommand{\R}{\mathbb{R}}
\newcommand{\p}{\mathbb{P}}
\newcommand{\Lat}{\mathscr{L}}
\newcommand{\norm}[1]{|\!|#1|\!|}
\newcommand{\ind}{\mathbf{1}}
\newcommand{\ep}{\epsilon}
\newcommand{\n}{\hspace*{-5pt}}
\newcommand{\E}{\mathbb{E}}
\begin{document}
\begin{frontmatter}
\title{Survival and extinction for a contact process with a density-dependent birth rate}
\runtitle{Contact process with a density-dependent birth rate}
\author{Jonas K\"oppl, Nicolas Lanchier and Max Mercer}
\runauthor{Jonas K\"oppl, Nicolas Lanchier, and Max Mercer}
\address{Weierstrass Institute \\ 10117 Berlin, Germany. \\ koeppl@wias-berlin.de}
\address{School of Mathematical and Statistical Sciences \\ Arizona State University \\ Tempe, AZ 85287, USA. \\ nicolas.lanchier@asu.edu \\ mamerce1@asu.edu}
\maketitle

\begin{abstract} \
 To study later spatial evolutionary games based on the multitype contact process, we first focus in this paper on the conditions for survival/extinction in the presence of only one strategy, in which case our model consists of a variant of the contact process with a density-dependent birth rate.
 The players are located on the~$d$-dimensional integer lattice, and have natural birth rate~$\lambda$ and natural death rate one.
 The process also depends on a payoff~$a_{11} = a$ modeling the effects of the players on each other:
 while players always die at rate one, they give birth at rate~$\lambda$ times the exponential of~$a$ times the fraction of occupied neighbors.
 In particular, the birth rate increases with the local density when~$a > 0$, in which case individuals have a beneficial effect on each other (cooperation), whereas the birth rate decreases with the local density when~$a < 0$, in which case individuals have a detrimental effect on each other (competition).
 Using standard coupling arguments to compare the process with the basic contact process, i.e., the particular case~$a = 0$, we prove that, for all payoffs~$a$, there is a phase transition from extinction to survival in the direction of the natural birth rate~$\lambda$.
 Using various block constructions among other techniques, we also prove that, for all natural birth rates~$\lambda$, there is a phase transition in the direction of~the payoff~$a$.
 This last result is in sharp contrast with the behavior of the nonspatial deterministic mean-field model in which the stability of the extinction state only depends on~$\lambda$.
 This underlines the importance of space~(local interactions) and stochasticity in our model.
\end{abstract}

\begin{keyword}[class=AMS]
\kwd[Primary ]{60K35, 91A22}
\end{keyword}

\begin{keyword}
\kwd{Interacting particle systems; Contact process; Spatial games; Block construction.}
\end{keyword}

\end{frontmatter}
\section{Introduction}
\label{sec:intro}
\noindent
 In the classical Harris' contact process~\cite{harris_1974}, each site of the~$d$-dimensional integer lattice is either empty or occupied by one particle.
 Particles independently give birth to one particle at rate~$\lambda$, in which case the newly created particle is sent to one of the~$2d$ nearest neighbors of the parent's site chosen uniformly at random, and die at rate one.
 When a particle is sent to a site already occupied by a particle, both particles coalesce.
 This paper is concerned with a natural variant of the contact process with a density-dependent birth rate that could be interpreted as a model of competition or cooperation depending on the choice of the parameters.
 Because we study in~\cite{koppl_lanchier_mercer} a multitype version of this model relevant in the context of evolutionary game theory, following the traditional modeling approach in this topic~\cite{maynardsmith_1982, nowak_2006}, we think of particles as players, assume that players receive a payoff~$a / 2d$ from each of their~(occupied) neighbors, and interpret the players' payoff as their fitness~(fertility).
 More precisely, like for the basic contact process, the state at time $t$ is a spatial configuration
 $$ \xi_t : \Z^d \longrightarrow \{0, 1\} \quad \hbox{where} \quad \hbox{0 = empty \ \ and \ \ 1 = occupied}. $$
 The payoff of the player at site~$x$ is given by~$a$ times the fraction~$f_1 (x, \xi_t)$ of occupied neighbors of site~$x$.
 To turn the payoff, which might be negative, into a birth rate, we assume that the player gives birth at rate~$\lambda \,h (a f_1 (x, \xi_t))$ where
\begin{equation}
\label{eq:mapping}
  h : \R \longrightarrow \R_+ \quad \hbox{is increasing and satisfies} \ h (0) = 1.
\end{equation}
 The monotonicity of the function~$h$ models the fact that, the larger the payoff, the more beneficial/less detrimental the interactions with other players.
 The condition~$h (0) = 1$ also indicates that, when~$a = 0$, the interactions are neutral, in which case the process reduces to the basic contact process.
 In particular, when~$a > 0$, players have a beneficial effect on each other~(cooperation) and the birth rate increases with the local density, whereas when~$a < 0$, players have a detrimental effect on each other~(competition) and the birth rate decreases with the local density.
 To fix the ideas, we assume from now on that~$h = \exp$, i.e., the player at site~$x$ gives birth at rate
\begin{equation}
\label{eq:birth-rate}
\Phi (x, \xi_t) = \lambda \exp (\hbox{payoff}) = \lambda \exp (a f_1 (x, \xi_t))
\end{equation}
 and dies at rate~1,
 but our results easily extend to any function~$h$ such that
 $$ \eqref{eq:mapping} \ \hbox{holds}, \qquad h (r) \to 0 \ \ \hbox{as} \ \ r \to - \infty, \qquad h (r) \to \infty \ \ \hbox{as} \ \ r \to + \infty. $$
 Snapshots of the contact process with birth rate~\eqref{eq:birth-rate} and different values of the parameters~$\lambda$ and~$a$ are shown in Figure~\ref{fig:payoff}.
\begin{figure}[t]
\centering
\scalebox{0.78}{\input{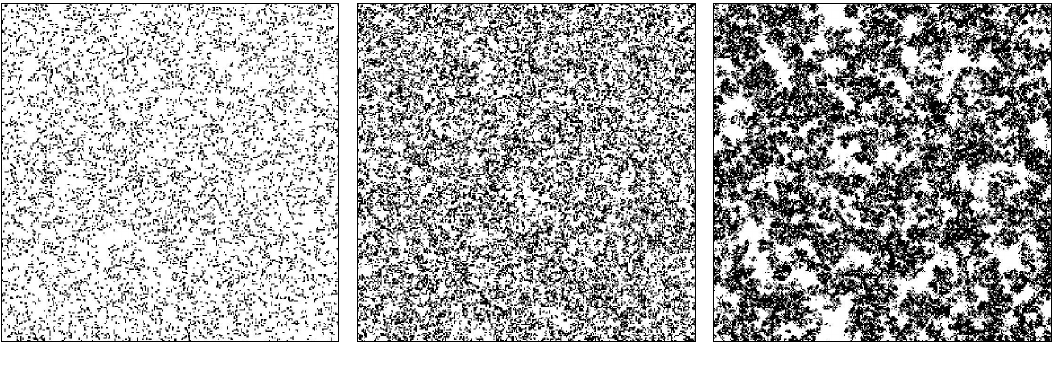_t}}
\caption{\upshape
 Snapshots at time~1000 of the two-dimensional contact process with various birth parameters~$\lambda$ and payoffs~$a$.
 Middle: When~$a = 0$, the process reduces to the basic contact process.
 Left: When~$a < 0$, small clusters have a reduced birth rate, which results in a more scattered configuration.
 Right: In contrast, when~$a > 0$, small clusters have an increased birth rate, which results in stronger spatial correlations.}
\label{fig:payoff}
\end{figure}
 As previously mentioned, our process can be viewed, in the context of population ecology, as a model of competition when~$a < 0$ and cooperation when~$a > 0$, with the strength of competition/cooperation increasing with~$|a|$.
 The results in this paper are also used in~\cite{koppl_lanchier_mercer} where we study a multitype version of interest in evolutionary game theory.
 With the exception of the process in~\cite{lanchier_2019} that was only designed to model the prisoner's dilemma rather than general games, all the interacting particle systems of interest in evolutionary game theory that were introduced in the probability literature consist of natural variants of the voter model~\cite{clifford_sudbury_1973,holley_liggett_1975}.
 The objective of~\cite{koppl_lanchier_mercer} is to initiate the study of evolutionary games based on the more realistic Neuhauser's multitype contact process~\cite{neuhauser_1992}, looking at a hybrid of the neutral multitype contact process characterized by a natural birth rate and the popular birth-death updating process~\cite{ohtsuki_al_2006} characterized by a payoff matrix.
 We point out, however, that the process in this paper does not fall in the framework of evolutionary game theory since all the individuals are of the same type/follow the same strategy. \\
\indent
 Because the birth rate in~\eqref{eq:birth-rate} is nondecreasing with respect to the natural birth rate~$\lambda$ and the payoff coefficient~$a$, it is expected that the probability of survival, defined as the probability that, starting with a single player, there will be at least one player at all times, also is nondecreasing with respect to~$\lambda$ and~$a$.
 Standard coupling arguments, however, fail to prove this result in the presence of competition~$a < 0$.
 Indeed, increasing the birth rate increases the local density of occupied sites, which decreases the birth rate of the surrounding players, so standard monotonicity arguments cannot be applied.
 Nevertheless, we can show that if the ``larger'' of two processes is cooperative, or at least neutral, then its long-term probability of survival is higher.
 More precisely, letting~$\xi_t^1$ and~$\xi_t^2$ be the processes with parameter pairs~$(\lambda_1, a_1)$ and~$(\lambda_2, a_2)$, respectively, we will prove the existence of a coupling such that
 $$ \begin{array}{rcl} \lambda_1 \leq \lambda_2, \ \ a_1 \vee 0 \leq a_2, \ \ \xi^1 \subset \xi^2 & \Longrightarrow & \xi_t^1 \subset \xi^2 \quad \forall t. \end{array} $$
 In the particular case where the initial configurations are equal, the previous implication is called ``monotonicity'', while in the particular case where the parameters are equal, it is referred to as ``attractiveness''.
 It follows from this coupling that,
 letting~$\p_{\lambda, a}^{\xi}$ be the law of the process with parameters~$\lambda$ and~$a$, starting from~$\xi_0 = \xi$,
\begin{equation}
\label{eq:monotone}
\begin{array}{rcl}
\lambda_1 \leq \lambda_2, \ \ a_1 \vee 0 \leq a_2, \ \ \xi^1 \subset \xi^2 & \Longrightarrow &
\p_{\lambda_1, a_1}^{\xi^1} [\xi_t \neq \varnothing \ \forall t] \leq \p_{\lambda_2, a_2}^{\xi^2} [\xi_t \neq \varnothing \ \forall t], \end{array}
\end{equation}
 showing monotonicity of the long-term probability of survival with respect to the parameters and the initial configuration.
 The implication~\eqref{eq:monotone} also shows that there is at most one phase transition from extinction to survival in the direction of each of the two parameters in the presence of cooperation.
 In addition, the natural birth rate~$\lambda$ being fixed, the probability of survival in the presence of cooperation~$a > 0$ is no less than the probability of survival in the presence of competition~$a < 0$.
 To indeed prove the existence of a phase transition, observe that, in the neutral case~$a = 0$, the birth rate~\eqref{eq:birth-rate} becomes~$\lambda$ regardless of the configuration, showing that the process reduces to the basic contact process.
 Letting~$\lambda_c = \lambda_c (\Z^d)$ denote the critical value of the contact process on the~$d$-dimensional lattice, and using other couplings, we can prove that
\begin{theorem}
\label{th:coupling}
 For every fixed~$- \infty < a < \infty$, the process
 $$ \begin{array}{rrrclcrcl}
    \hbox{(a)} & \hbox{survives when} & (a > 0 & \n \hbox{and} \n & \lambda > \lambda_c)    & \hbox{or}
                                      & (a < 0 & \n \hbox{and} \n & \lambda > \lambda_c \,e^{- a (1 - 1/2d)}), \vspace*{2pt} \\
    \hbox{(b)} & \hbox{dies out when} & (a < 0 & \n \hbox{and} \n & \lambda \leq \lambda_c) & \hbox{or}
                                      & (a > 0 & \n \hbox{and} \n & \lambda \leq \lambda_c \,e^{- a (1 - 1/2d)}). \end{array} $$
\end{theorem}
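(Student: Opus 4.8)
The plan is to deduce all four statements from two comparisons with the basic contact process. Set
$$ \mu_- := \lambda \,(1 \wedge e^{a(1 - 1/2d)}), \qquad \mu_+ := \lambda \,(1 \vee e^{a(1 - 1/2d)}), $$
and let $\zeta^-$ and $\zeta^+$ denote basic contact processes with respective rates $\mu_-$ and $\mu_+$. I would construct $\xi$, $\zeta^-$, and $\zeta^+$ on a common probability space, started from a common initial configuration, so that $\zeta_t^- \subset \xi_t \subset \zeta_t^+$ for all $t \geq 0$. Granting this, the theorem follows from the standard facts that the basic contact process with parameter $\mu$ survives (from a single occupied site, with positive probability) when $\mu > \lambda_c$ and dies out almost surely from any finite configuration when $\mu \leq \lambda_c$, the critical case being the theorem of Bezuidenhout and Grimmett. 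Indeed, when $a > 0$ one has $\mu_- = \lambda$ and $\mu_+ = \lambda e^{a(1-1/2d)}$, while when $a < 0$ one has $\mu_- = \lambda e^{a(1-1/2d)}$ and $\mu_+ = \lambda$, so the condition $\mu_- > \lambda_c$ unravels into exactly case~(a) and the condition $\mu_+ \leq \lambda_c$ unravels into exactly case~(b). Note that two of the four statements, survival for $a > 0$ when $\lambda > \lambda_c$ and extinction for $a < 0$ when $\lambda \leq \lambda_c$, already follow from the monotonicity~\eqref{eq:monotone} by comparison with the neutral case $a = 0$; the substance below concerns the coupling in the other two directions.

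The coupling rests on the following elementary remark. If the player at~$x$ is present in a configuration~$\xi$ and at least one neighbor of~$x$ is empty, then the fraction of occupied neighbors of~$x$ satisfies $f_1(x, \xi) \leq 1 - 1/2d$, hence the birth rate at~$x$ obeys
$$ \mu_- \leq \Phi(x, \xi) = \lambda \exp(a f_1(x, \xi)) \leq \mu_+ . $$
Since a birth that sends a particle onto an already-occupied site changes nothing, for the purpose of tracking the set of occupied sites only births aimed at empty sites matter, and for those the birth rate is squeezed between $\mu_-$ and $\mu_+$.

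To turn this into a rigorous coupling I would use a Harris-type graphical representation. For each ordered pair of neighbors $(x,y)$, draw arrows according to a Poisson process of rate $\lambda e^{|a|}/2d$, attaching to each arrow an independent mark $U$ uniform on~$[0,1]$; for each site~$x$, lay down death marks according to an independent rate-$1$ Poisson process. From a given initial configuration, let~$\xi$ use an arrow from~$x$ to~$y$ at time~$t$ precisely when $\xi_{t^-}(x) = 1$ and $U \leq e^{a f_1(x, \xi_{t^-})}/e^{|a|}$, let~$\zeta^\pm$ use it precisely when $\zeta^\pm_{t^-}(x) = 1$ and $U \leq \mu_\pm / (\lambda e^{|a|})$, and let all three processes kill the particle at~$x$ at each death mark at~$x$; since the total rate at each site is bounded, all three processes are well defined by the usual percolation argument, and they have the correct dynamics. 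One then checks, by induction over the locally finitely many graphical events, that the inclusions $\zeta^-_t \subset \xi_t \subset \zeta^+_t$ persist: death marks act on all three and preserve inclusions; if an arrow $x \to y$ is used by~$\zeta^-$, then $\zeta^-_{t^-}(x) = 1$ and $U \leq \mu_-/(\lambda e^{|a|})$, so $\xi_{t^-}(x) = 1$, and if~$y$ is already occupied in~$\xi$ there is nothing to prove, while if~$y$ is empty in~$\xi$ the remark above gives $e^{a f_1(x, \xi_{t^-})}/e^{|a|} \geq \mu_-/(\lambda e^{|a|}) \geq U$, so the arrow is used by~$\xi$ as well; the inclusion $\xi_t \subset \zeta^+_t$ is verified symmetrically, using $e^{a f_1(x, \xi_{t^-})}/e^{|a|} \leq \mu_+/(\lambda e^{|a|})$ whenever the target of the arrow is empty in~$\xi$. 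This yields the desired sandwich and hence the theorem.

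The step I expect to be the main obstacle is precisely the verification of the sandwich in the two ``hard'' directions, $\zeta^- \subset \xi$ when $a < 0$ and $\xi \subset \zeta^+$ when $a > 0$: there the crude pointwise bounds on~$\Phi$ run the wrong way, so one must genuinely exploit that only births onto currently-empty sites affect the occupied set, and the bookkeeping with the uniform marks, in particular the case in which the arrow's target is already occupied, has to be handled with care, although it involves no deep difficulty.
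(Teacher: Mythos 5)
Your proposal is correct and takes essentially the same route as the paper: the key observation that a birth onto an empty site comes from a parent with at least one empty neighbor, so that $f_1 \leq 1 - 1/2d$ and the relevant birth rate is squeezed between $\lambda$ and $\lambda e^{a(1-1/2d)}$, is exactly the paper's argument, and the resulting sandwich between two basic contact processes together with the Bezuidenhout--Grimmett theorem for the critical case gives the same conclusion. The only cosmetic difference is that you build the coupling by hand via a marked graphical representation, whereas the paper invokes the standard comparison theorem (Theorem III.1.5 of \cite{liggett_1985}).
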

\noindent
 The theorem implies that, for all fixed~$a$, there is~(at least) one phase transition from extinction to survival at a nondegenerate critical value
 $$ \lambda_c (a) = \inf \{\lambda : \p_{\lambda, a}^{\{0 \}} [\xi_t \neq \varnothing \ \forall t] > 0 \} \in (0, \infty). $$
 More precisely, we have the upper and lower bounds
 $$ \begin{array}{rcl}
      0 < \lambda_ c e^{- a (1 - 1/2d)} < \lambda_c (a) < \lambda_c < \infty & \hbox{for all} & a > 0, \vspace*{4pt} \\
      0 < \lambda_ c < \lambda_c (a) < \lambda_c e^{- a (1 - 1/2d)} < \infty & \hbox{for all} & a < 0. \end{array} $$
 In addition, taking~$a_1 = a_2 > 0$ and~$\xi^1 = \xi^2 = \{0 \}$ in~\eqref{eq:monotone} shows that the long-term probability of survival is monotone with respect to the natural birth rate~$\lambda$, from which it follows that the phase transition for~$a > 0$ is unique, i.e., for all~$a > 0$,
 $$ \begin{array}{rcl}
    \lambda > \lambda_c (a) & \Longrightarrow & \p_{\lambda, a}^{\{0 \}} [\xi_t \neq \varnothing \ \forall t] > 0, \vspace*{4pt} \\ 
    \lambda < \lambda_c (a) & \Longrightarrow & \p_{\lambda, a}^{\{0 \}} [\xi_t \neq \varnothing \ \forall t] = 0. \end{array} $$
 Proving that, for each fixed birth rate~$0 < \lambda < \infty$, there exists a phase transition in the direction of the payoff~$a$ is more complicated.
 Looking first at survival, even when~$\lambda > 0$ is small, for~$a < \infty$ large positive, adjacent players have a large birth rate due to cooperation.
 In particular, it can be proved that, with high probability, a small block of players quickly doubles in size.
 Using also attractiveness when the payoff is positive, and a block construction, implies survival when~$a$ is large positive.
\begin{theorem}[Survival]
\label{th:survival}
 For every birth rate~$\lambda > 0$, there exists an~$a_+ = a_+ (\lambda, d) < + \infty$ such that the process survives for all payoffs~$ a > a_+$.
\end{theorem}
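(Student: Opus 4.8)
Fix the birth rate $\lambda > 0$. The plan is to prove survival for all large $a$ by a block construction, i.e.\ by comparing the process with a supercritical oriented percolation. Two observations set the stage. First, by attractiveness --- the coupling stated in the introduction applied with $a_1 = a_2 = a > 0$ --- the probability of survival is nondecreasing in the initial configuration, so it suffices to prove survival starting from a single pair of adjacent occupied sites; moreover, the graphical (Harris) construction underlying that coupling makes events depending on the trajectory in disjoint space-time regions independent, which is what will render the blocks of the construction finitely dependent, and also shows that the process in which all births outside a fixed box are suppressed is dominated by the original process. Second, the quantitative feature of the dynamics that drives everything is that, \emph{as soon as an occupied site has at least one occupied neighbor}, its birth rate $\Phi$ is at least $\lambda\,e^{a/2d}$, which tends to $+\infty$ as $a \to \infty$, whereas every occupied site still dies at rate exactly one.

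The heart of the matter is the following growth estimate: fix $\ell \in \N$ and $\epsilon > 0$; then there exist a finite time $T$ and a threshold $a_+ < \infty$ such that, for every $a > a_+$, if at some time $s$ the configuration contains a pair of adjacent occupied sites within distance one of a site $x$, then with probability at least $1 - \epsilon$ the configuration at time $s + T$ contains, for each coordinate direction $i$ and each sign, a pair of adjacent occupied sites within distance one of $x \pm \ell e_i$. Granting this, the comparison is routine: one tiles space-time by boxes of spatial side of order $\ell$ and temporal side $T$, declares such a box \emph{good} when the growth estimate succeeds on it, notes that good boxes occur with probability tending to $1$ as $a \to \infty$ and interact only over a bounded range, and applies the standard comparison theorem for finitely dependent oriented percolation to conclude that, for $a$ large, the good boxes percolate; an infinite open oriented path then propagates an occupied pair for all times, so $\p_{\lambda,a}^{\{0\}}[\xi_t \neq \varnothing\ \forall t] > 0$.

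To prove the growth estimate I would work throughout with the process restricted to a large but fixed box $B$ around $x$ and split into two phases. \emph{Phase 1 --- filling.} Starting from an adjacent pair, and up to the first death, the occupied set is always a connected subset of $B$ of size at least two, so every occupied site has an occupied neighbor and hence birth rate $\geq \lambda\,e^{a/2d}$; and as long as $B$ is not entirely occupied, some occupied site also has an empty neighbor, so a new site is created at rate $\geq \lambda\,e^{a/2d}/2d$. Since at most $|B|$ sites are ever present, a death occurs at total rate at most $|B|$; comparing these rates shows that, with probability tending to $1$ as $a \to \infty$, the box $B$ becomes completely occupied, and does so before any death, within any prescribed small time $\delta$. \emph{Phase 2 --- maintenance.} Once $B$ is full, it stays $(1-\epsilon)$-occupied --- and in particular retains an occupied pair near each target $x \pm \ell e_i$ --- up to time $T$. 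The mechanism is self-correction: an empty site of $B$ that has an occupied neighbor which itself has an occupied neighbor is refilled at rate $\geq \lambda\,e^{a/2d}/2d$, so the number $V_t$ of empty sites of $B$ increases at rate at most $|B|$ but, as long as the empty set stays sparse, decreases at rate of order $V_t\,\lambda\,e^{a/2d}$; a comparison with a birth-and-death chain whose death rate diverges with $a$ then keeps $V_t$ below $\epsilon|B|$ on $[\delta,T]$ with probability tending to $1$, and a companion estimate prevents the empty set from forming holes near the targets, which yields the desired pairs.

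The main obstacle is the maintenance step of Phase 2: the dynamics of the empty set is not monotone, so $V_t$ cannot be dominated by a single birth-and-death chain without first controlling the \emph{geometry} of the empty set --- concretely, ruling out, with probability tending to $1$, the appearance before time $T$ of a small ball that becomes entirely empty, which would rob a nearby occupied site of its large birth rate and break the self-correction. I expect to handle this by a direct race estimate: creating $k$ empty sites in a small neighbourhood requires $k$ successive deaths to win out against refillings that occur at rate $\geq \lambda\,e^{a/2d}/2d$, an event of probability at most a fixed power of $C/(\lambda\,e^{a/2d})$ per site and per unit time, hence negligible for $a$ large; once the empty set is known to consist only of small, well-separated components, the birth-and-death comparison goes through. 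Alternatively, one can sidestep the maintenance phase altogether by taking a vanishing time step $\delta$ and requiring that no death at all occur during a step --- the box then merely grows, pure-birth-like, and fills its neighbours --- at the price of a larger threshold $a_+$. The remaining ingredients, namely the filling estimate of Phase 1 and the deduction of survival from the growth estimate via oriented percolation, follow standard arguments.
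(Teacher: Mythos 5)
Your overall strategy is the paper's: observe that any occupied site with an occupied neighbor gives birth at rate at least $\lambda e^{a/2d} \to \infty$, dominate the process from below by a modified process in which isolated players cannot give birth and births outside a fixed box are suppressed (so that block events are measurable with respect to a bounded space--time region), prove a doubling/growth estimate with probability $1-\ep$ for $a$ large, and feed it into the standard comparison with finitely dependent oriented site percolation. Where you diverge is in how you prove the growth estimate, and the route you present as your main one is the harder and incomplete one. Your Phase~2 (maintenance of a nearly full box over a macroscopic time $T$) is exactly the step you yourself flag as problematic: the empty-set dynamics is not monotone, the self-correction mechanism fails once a small ball empties out, and the ``race estimate'' you sketch to exclude such holes is not carried out; as written this is a genuine gap in your primary argument. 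The paper avoids the entire issue by the device you relegate to your last sentence: it takes the time step $\tau$ so small that, with probability $1-\ep/2$, there are \emph{no} death marks at all in the target block $\Lambda_+\times[0,\tau]$ (Lemma~\ref{lem:death}), so conditional on that event the occupied set only grows and one merely has to check that $\Lambda_-=\{0,1\}^d$ fills $\Lambda_+=\{-1,0,1,2\}^d$ within time $\tau$ when $a$ is large. The one nontrivial point there, which your sketch does not mention, is that the corners of $\Lambda_+$ are not adjacent to $\Lambda_-$ when $d>1$, so the filling must be done in $d$ stages through the shells $\Lambda_i=\{y\in\Lambda_+:\min_{x\in\Lambda_-}\norm{x-y}_1=i\}$, each stage succeeding with probability $(1-\exp(-\lambda\tau e^{a/2d}/2d^2))^{4^d}\to 1$ as $a\to\infty$ (Lemma~\ref{lem:birth}). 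I would advise you to promote your ``alternative'' to the main argument, add the shell decomposition, and drop Phase~2 entirely; the maintenance approach buys nothing here, since the block construction only requires the seed box to be reproduced one time step later, not to persist throughout the step.
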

\noindent
 In contrast, even when~$\lambda < \infty$ is large, for~$a = - \infty$, adjacent players cannot give birth due to competition, and it can be proved that the family generated by a single player decays exponentially in space and time.
 Using another block construction and a perturbation argument implies extinction when~$a$ is large negative.
\begin{theorem}[Extinction]
\label{th:extinction}
 For every birth rate~$\lambda < \infty$, there exists an~$a_- = a_- (\lambda, d) > - \infty$ such that the process dies out for all payoffs~$a < a_-$.
\end{theorem}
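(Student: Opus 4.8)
The plan is to first understand the degenerate limit $a=-\infty$, in which adjacent players cannot reproduce; then treat a very negative but finite payoff as a perturbation of this limit; and finally glue the resulting local estimates together by a block construction. \textbf{Step 1 (the $a=-\infty$ process as wandering lineages).} Let $\bar\xi$ be the process in which a player at $x$ gives birth at rate $\lambda\,\ind\{f_1(x,\bar\xi)=0\}$ — isolated players reproduce at rate $\lambda$, all others at rate $0$ — and dies at rate $1$. Assigning to each occupied site its ancestor through the graphical representation, write $\bar\xi_t=\bigcup_{x\in\bar\xi_0}F^x_t$, where $F^x_t$ is the set of sites whose ancestor is $x$. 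A birth can only be performed by an isolated site, in particular by a site that is the \emph{only} one in its lineage, so each $F^x_t$ is always a singleton, an adjacent pair, or empty: a singleton births (at rate $\le\lambda$, lowered to $0$ as soon as a neighbour is occupied by another lineage) onto an empty neighbour, producing an adjacent pair, which performs no birth and can only lose a site at rate $2$, returning to a singleton. Since the death rate out of a singleton is always exactly $1$, each excursion from a singleton ends at $\varnothing$ with probability at least $1/(1+\lambda)$, uniformly in the surrounding configuration; hence the lifetime of $F^x$ is stochastically dominated by $\sum_{i\le G}(\mathrm{Exp}(1)_i+\mathrm{Exp}(2)_i)$ with $G$ geometric of success probability $1/(1+\lambda)$, and $F^x$ stays within distance $G$ of $x$, both having exponentially decaying tails with a rate $\gamma_0=\gamma_0(\lambda)>0$. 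A union bound over the $|\bar\xi_0|$ lineages then gives $\p[\bar\xi_t\ne\varnothing]\le|\bar\xi_0|\,Ce^{-\gamma_0 t}$ and $\p[\bar\xi\text{ ever leaves }\bigcup_{x\in\bar\xi_0}(x+[-r,r]^d)]\le|\bar\xi_0|\,Ce^{-\gamma_0 r}$; moreover $|\bar\xi_t|\le2|\bar\xi_0|$ at all times, each lineage having at most two sites.

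\textbf{Step 2 (finite $a$ as a perturbation over a bounded horizon).} Fix $\lambda$ and a horizon $T$, and couple $\xi$ (parameters $\lambda,a$) with $\bar\xi$ through one graphical representation: shared death marks, and a common Poisson process of rate-$\lambda$ potential-birth marks at each site, a mark at $x$ at time $t$ being kept by $\bar\xi$ iff $x$ is isolated and by $\xi$ with probability $\exp(a f_1(x,\xi_{t^-}))$. Since this probability is $1$ at an isolated site, $\xi$ and $\bar\xi$ stay equal until $\xi$ keeps a mark at a non-isolated site — an ``anomalous'' birth. While they are equal the rate of an anomalous birth is at most $\lambda e^{a/2d}\,|\bar\xi_s|\le 2\lambda e^{a/2d}\,N_0$ when starting from $N_0$ players, so $\p[\xi_s\ne\bar\xi_s\text{ for some }s\le T]\le 2\lambda e^{a/2d}\,N_0 T$, which tends to $0$ as $a\to-\infty$.

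\textbf{Step 3 (block construction).} Renormalize space-time into blocks of spatial side $L$ and temporal length $T$. Call a block \emph{good} when its graphical data in a bounded neighbourhood are such that, for \emph{every} initial configuration supported in that neighbourhood: (i) the Step~2 coupling with $\bar\xi$ holds over the block's time interval; (ii) the resulting $\bar\xi$ never leaves the neighbourhood; and (iii) it is empty at the end of the interval. Steps~1--2 make good blocks as likely as desired, fixing parameters in this order: $L$ large so that (ii) fails with small probability (exponential range bound, union over the $\le L^d$ initial players); then $T$ large relative to $\log L$ so that (iii) fails with small probability even for a fully occupied block (exponential lifetime bound); then $a$ negative enough that (i) fails with probability $\le 2\lambda e^{a/2d}\,L^d T$, small. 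Good-ness of a block depends only on data in finitely many neighbouring blocks, and blocks at distinct time levels use disjoint data and are independent. The decisive point is that a good block both confines the process to its neighbourhood and clears that neighbourhood by the end of its time interval, so the process can be nonempty at time level $n$ only if there is a path $(m_0,0),(m_1,1),\dots,(m_{n-1},n-1)$ of \emph{bad} blocks — one per level, consecutive ones within bounded renormalized distance, anchored at the block of the initial player. There are at most $c_d^{\,n}$ such paths and, the levels being distinct, the $n$ blocks of a path are independent, so $\p[\xi_{nT}\ne\varnothing]\le(c_d\,p_{\mathrm{bad}})^n$. Choosing $a$ so negative that $p_{\mathrm{bad}}<1/c_d$ makes this summable in $n$, and Borel--Cantelli yields $\xi_t=\varnothing$ for all large $t$ almost surely, which is the theorem with $a_-=a_-(\lambda,d)$.

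\textbf{Expected main obstacle.} Steps~1 and~2 are short; the real work is Step~3. The delicate feature is that for $a<0$ the process is neither monotone nor additive, so it cannot be compared with processes restricted to a block in the familiar way; the good-block event therefore has to be engineered to be robust to \emph{every} configuration it might inherit from its neighbours, which is exactly what the quantification over initial configurations in (i)--(iii) provides. The loss of monotonicity is absorbed by letting $T$ grow faster than $\log L$, so that a completely occupied block still empties with high probability, and the observation that a very negative payoff makes the process travel at most a bounded distance per unit time — anomalous births being rare — is what keeps the renormalized dependence range finite. Finally, because the Step~2 coupling controls $\xi$ only on a bounded horizon, one is forced to renormalize into fixed-length blocks and restart the comparison afresh in each one.
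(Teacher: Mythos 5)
Your overall strategy is the one the paper uses: analyze the limit $a=-\infty$, where each lineage is a singleton or an adjacent pair that dies after a geometric number of generations (your Step~1 is the content of Lemmas~\ref{lem:exp-decay-time}--\ref{lem:non-interacting-comparison}), perturb to finite $a$ by showing the two constructions agree on a bounded space-time region with probability tending to one as $a\to-\infty$ (your Step~2 is Lemma~\ref{lem:good-blocks}), and conclude with a renormalization. Steps~1 and~2 are essentially correct, though note that the event ``no anomalous birth'' as you define it is not measurable with respect to local graphical data, since whether a retained mark is anomalous depends on the configuration; you need to attach independent uniforms to the marks (or use the paper's device of type-$i$ arrows at rate $\lambda e^{ia/2d}/2d$) so that the good-block event becomes a function of the graphical representation in a bounded space-time region.

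The genuine gap is in Step~3, and it sits exactly where the paper does its hardest work. Your good-block event quantifies over initial configurations supported in the block's neighbourhood at the start of its time interval, but it says nothing about players that enter that neighbourhood from outside during the interval; and your reduction to a path of bad blocks ``one per level, consecutive ones within bounded renormalized distance'' silently assumes that even a bad block cannot propagate influence further than a bounded number of blocks within one time level. The justification you offer for the latter (anomalous births are rare) is circular: inside a bad block the coupling may have failed, and all you know there is that births occur at rate at most $\lambda$, so a separate finite-speed estimate (e.g.\ domination by a rate-$\lambda$ Richardson process, built into the good event) is needed before the contour count $c_d^{\,n}p_{\mathrm{bad}}^{\,n}$ is legitimate. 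The paper confronts the first issue head-on in Proposition~\ref{prop:good-blocks}: the event is that the inner block $B_{m,n}$ is empty \emph{regardless of the configuration outside} the enlarged block $A_{m,n}$, which forces one to control the invasion paths entering through the periphery $\Lambda_+$; since their number is random, this requires a Poisson tail bound on the number of births onto $\Lambda_+$ before the spatial decay of Lemma~\ref{lem:exp-decay-space} can be applied. Once the events are local in this sense, the paper can invoke the comparison with finitely dependent oriented site percolation and the exponential decay of paths of closed sites from van den Berg, Grimmett and Schinazi, rather than a bare independent-levels contour argument. Your Step~3 is repairable along these lines, but as written the key implication ``nonempty at level $n$ $\Rightarrow$ path of bad blocks'' does not follow from the good-block event you defined.
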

\noindent
 We refer to the phase diagram in Figure~\ref{fig:phase} for a summary and visualization of our main results.
\begin{figure}[t!]
\centering
\scalebox{0.40}{\input{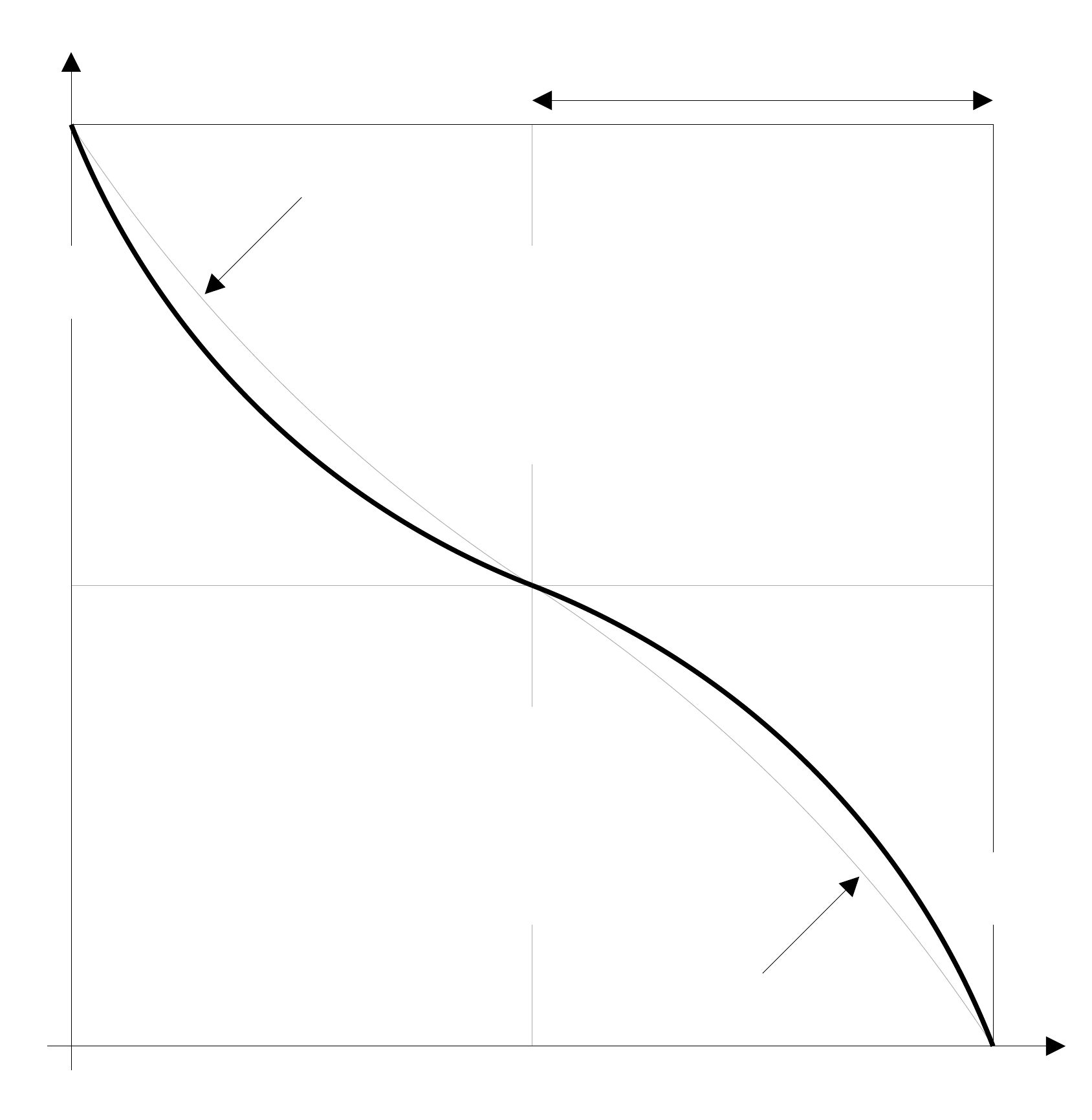_t}}
\caption{\upshape{
 Phase structure of the contact process with a density-dependent birth rate.}}
\label{fig:phase}
\end{figure}
 Before going into the proofs, we point out that Theorem~\ref{th:extinction} cannot be deduced from general ergodicity results like the~$(M - \ep)$-criterion~\cite[Theorem I.4.1]{liggett_1985}, which asserts that a process is ergodic if the quantity~$M$, measuring how much the transition rate at a site~$x$ depends on the configuration on~$\Z^d \setminus \{x\}$, is less than the quantity~$\ep$, which can be seen as the underlying minimal transition rate of the process.
 Just like for the classical contact process, we have~$\ep = 1$, while~$M = M (\lambda, a)$ is lower bounded by the corresponding value for the contact process, i.e.,~$M \geq 2d \lambda$.
 In particular, the~$(M - \varepsilon)$-criterion can at most be useful in subcritical regime of the contact process but does not tell us anything about what happens for large~$\lambda$.
 We also note that, although the technical details in the proofs of Theorems~\ref{th:survival} and~\ref{th:extinction} differ significantly, the intuition behind both results is the same.
 Even if~$\lambda$ is very small, once a player gives birth, this player and its offspring form an adjacent pair with an arbitrarily large birth rate when~$a$ is large positive.
 Similarly, even if~$\lambda$ is very large, once a player gives birth, this player and its offspring form an adjacent pair with an arbitrarily small birth rate when~$a$ is large negative. 
 In both cases, the conclusion~(survival/extinction) is due to the presence of local interactions: the players place their offspring in their neighborhood, while their payoff is also determined by their neighbors.
 In particular, the two theorems are expected to fail in the absence of local interactions.
 Indeed, a simple analysis of the nonspatial deterministic mean-field model shows that whether the trivial extinction fixed point~0 is stable or unstable, and so whether the population dies out or survives starting at low density, depends on~$\lambda$ but not on~$a$. \\
\indent
 The rest of the paper is devoted to the proofs.
 Section~\ref{sec:mean-field} gives a brief analysis of the mean-field model, focusing on the local stability of the trivial fixed point, but also on the size of its basin of attraction.
 Section~\ref{sec:coupling} relies on various couplings to prove monotonicity and attractiveness in the case where~$a \geq 0$.
 Similar couplings are used to compare the process with the basic contact process, and deduce Theorem~\ref{th:coupling}.
 Section~\ref{sec:survival} uses a block construction to prove Theorem~\ref{th:survival}.
 Finally, Section~\ref{sec:extinction} establishes some exponential decay to deduce extinction of the process with~$a = - \infty$ from a block construction.
 Theorem~\ref{th:extinction} is then deduced by using in addition a perturbation argument.

%%%%%%%%%%%%%%%%%%%%%%%%%%%%%%%%%%%%%%%%%%%%%%%%%%%%%%%%%%%%%%%%%%%%%%%%%%%%%%%%%%%%%%%%%%%%%%%%%%%%%%%%%%%%%%%%%%%%%%%%%%%%%%%%%%%%%%%%%%%%%%%%%%%%%%%%%%%%%%%%%%%%%%%%%%%%%%%%%%%%%%%%%%%%%%%%%%

\section{Mean-field model}
\label{sec:mean-field}
\noindent
 This section gives a brief analysis of the nonspatial deterministic mean-field model, which describes the process in the large population limit when the system is homogeneously mixing.
 To derive this model formally, assume that the population is located on the complete graph with~$N$ vertices, and let~$|\xi_t|$ be the number of individuals at time~$t$.
 Each individual dies at rate one and gives birth at rate~$\lambda e^{a |\xi_t|/N}$, and an offspring is sent to an empty site with probability~$1 - |\xi_t|/N$.
 In particular, it follows from the superposition property and the thinning property for Poisson processes that
 $$ |\xi_t| \to \left\{\begin{array}{rcl} |\xi_t| + 1 & \hbox{at rate} & \lambda e^{a |\xi_t|/N} |\xi_t| (1 - |\xi_t|/N), \vspace*{4pt} \\
                                          |\xi_t| - 1 & \hbox{at rate} & |\xi_t|. \end{array} \right. $$
 Rescaling population to the proportion of occupied sites~$u_t = |\xi_t|/N$, we get
 $$ u_t \to \left\{\begin{array}{rcl} u_t + 1/N & \hbox{at rate} & N \lambda e^{a u_t} u_t (1 - u_t), \vspace*{4pt} \\
                                      u_t - 1/N & \hbox{at rate} & N u_t. \end{array} \right. $$ 
 It follows that the mean-field model is described by the differential equation
 $$ \begin{array}{rcl}
      u' & \n = \n & \lim_{N \to \infty} \,N \ \E [u_{t + 1/N} - u_t \,| \,u_t = u] \vspace*{4pt} \\
         & \n = \n & \lim_{N \to \infty} \,N \,((1/N) \,\p [u_{t + 1/N} = u_t + 1/N \,| \,u_t = u] \vspace*{4pt} \\
         &         & \hspace*{75pt} - \ (1/N) \,\p [u_{t + 1/N} = u_t - 1/N \,| \,u_t = u] + o (1/N)) \vspace*{4pt} \\
         & \n = \n & \lambda e^{au} u (1 - u) - u = \phi (u). \end{array} $$
 Because of the exponential form of the birth rate, we cannot obtain the exact expression of the fixed points.
\begin{figure}[t!]
\centering
\scalebox{0.40}{\input{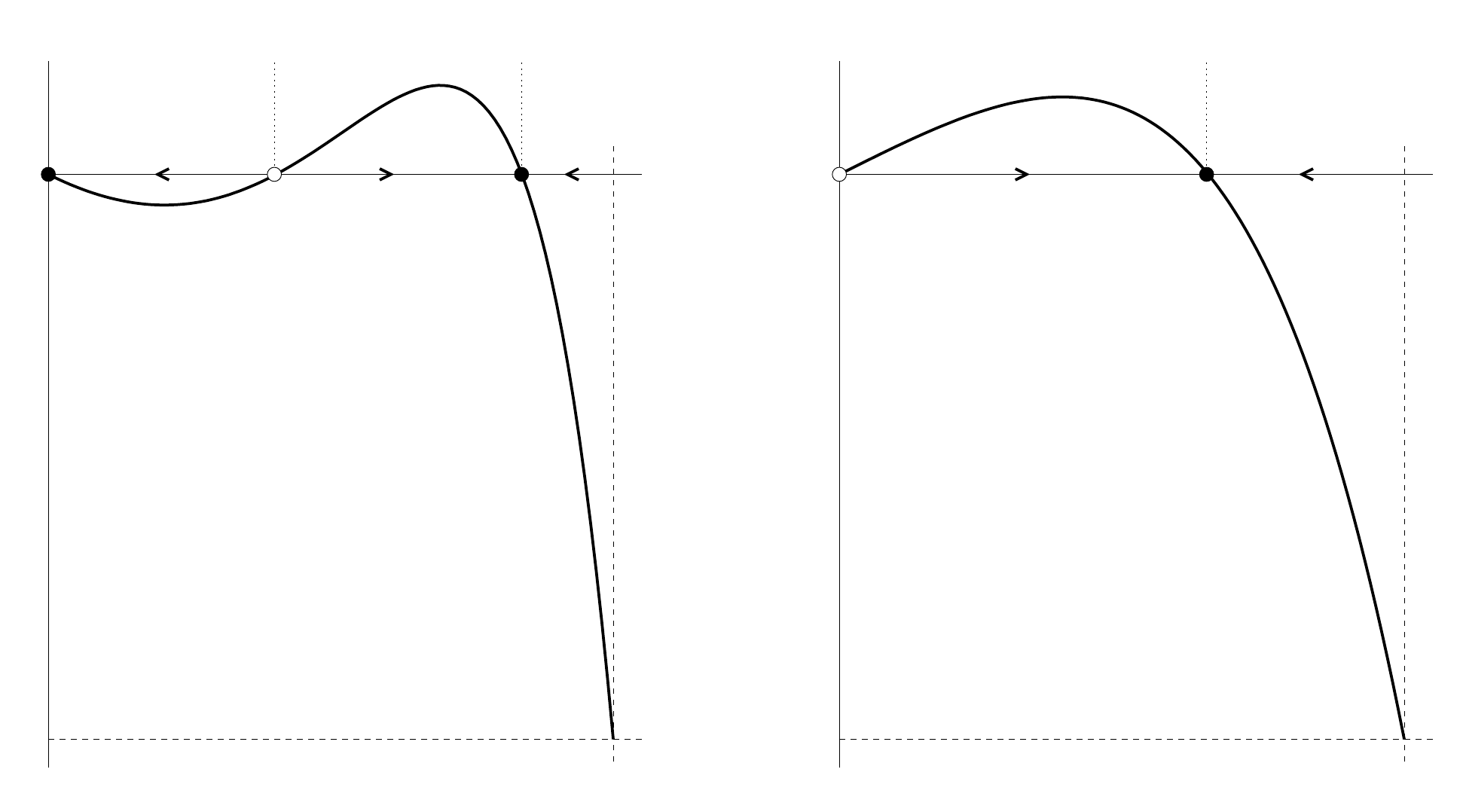_t}}
\caption{\upshape{
 Dynamical structure of the mean-field model.
 The left picture shows an example where~$\lambda < 1$ and~$a > a_c (\lambda)$, leading to bistability, while the right picture shows an example where~$\lambda > 1$, leading to a globally stable interior fixed point.}}
\label{fig:phi}
\end{figure}
 However, the stability of the trivial fixed point~0, corresponding to the extinction state, as well as the existence and stability of additional~(interior) fixed points, can be studied.
 To begin with, observe that, for all fixed~$- \infty < a < \infty$,
\begin{equation}
\label{eq:stability}
\begin{array}{rcl} u \approx 0 & \Longrightarrow & \phi (u) \approx \lambda u - u = (\lambda - 1) u. \end{array}
\end{equation}
 This shows that the stability of the trivial fixed point, and so whether the population survives or dies out when starting at low density, depends on the birth rate~$\lambda$ but not on the payoff~$a$, which is in sharp contrast with Theorems~\ref{th:survival} and~\ref{th:extinction}. \vspace*{5pt} \\
\noindent
{\bf Extinction phase.}
 It follows from~\eqref{eq:stability} that, when~$\lambda < 1$, the trivial fixed point is locally stable, so the population dies out~($u \to 0$ starting at low density) even when the payoff~$a$ is very large positive.
 This contrasts with Theorem~\ref{th:survival}, which states that, even starting with a finite number of players~(density zero), the population survives with positive probability.
 However, for all~$\lambda > 0$ and~$\bar u < 1/2$,
 $$ \begin{array}{rcl} a > \ln (2 / \lambda) / \bar u & \Longrightarrow & \phi (\bar u) > \lambda e^{\ln (2 / \lambda)} \,\bar u (1 - \bar u) - \bar u = 2 \bar u (1 - \bar u) - \bar u \geq 0. \end{array} $$
 This shows that~$\phi (u)$ becomes positive at some unstable fixed point~$u_- \in (0, \bar u)$, and so that the system is bistable.
 In particular, starting at a density~$> u_-$, the population converges to a limit~$u_+ > u_- > 0$.
 In conclusion, when the birth rate~$\lambda < 1$, there is extinction in the sense that the trivial fixed point is locally stable, but the basin of attraction of~0 can be made arbitrarily small by taking the payoff~$a$ sufficiently large positive.
 See Figure~\ref{fig:phi} for a picture.
 In fact, more can be said about the parameter region where the system is bistable.
 Note that bistability appears when there is a unique interior fixed point~$u_0$ such that the graph of~$\phi$ is tangent to the $x$-axis at point~$u_0$, which means that both~$\phi (u_0)$ and~$\phi' (u_0)$ are equal to zero.
 In particular,~$u_0 > 0$ and
\begin{equation}
\label{eq:bistable-1}
\phi (u_0) = \lambda e^{au_0} u_0 (1 - u_0) - u_0 = 0 \quad \hbox{and so} \quad \lambda e^{au_0} (1 - u_0) = 1.
\end{equation}
 In addition, looking at the derivative, we get
\begin{equation}
\label{eq:bistable-2}
\begin{array}{rcl}
\phi' (u_0) & \n = \n & \lambda a e^{au_0} u_0 (1 - u_0) + \lambda e^{au_0} (1 - u_0) - \lambda e^{au_0} u_0 - 1 \vspace*{4pt} \\
            & \n = \n & \lambda e^{au_0} (au_0 (1 - u_0) - 2u_0 + 1) - 1 \vspace*{4pt} \\
            & \n = \n & \lambda e^{au_0} (1 - u_0)(au_0 + 2) - \lambda e^{au_0} - 1 = 0.  \end{array}
\end{equation}
 Using~\eqref{eq:bistable-1}, we can replace~$\lambda e^{au_0} (1 - u_0)$ with~1 in~\eqref{eq:bistable-2} to get
\begin{equation}
\label{eq:bistable-3}
  (au_0 + 2) - \lambda e^{au_0} - 1 = 0 \quad \hbox{and so} \quad \lambda e^{au_0} = 1 + au_0.
\end{equation}
 Now, for all~$\lambda < 1$, let~$x_{\lambda}$ be the unique positive solution of~$\lambda e^x = 1 + x$.
\begin{figure}[t!]
\centering
\scalebox{0.40}{\input{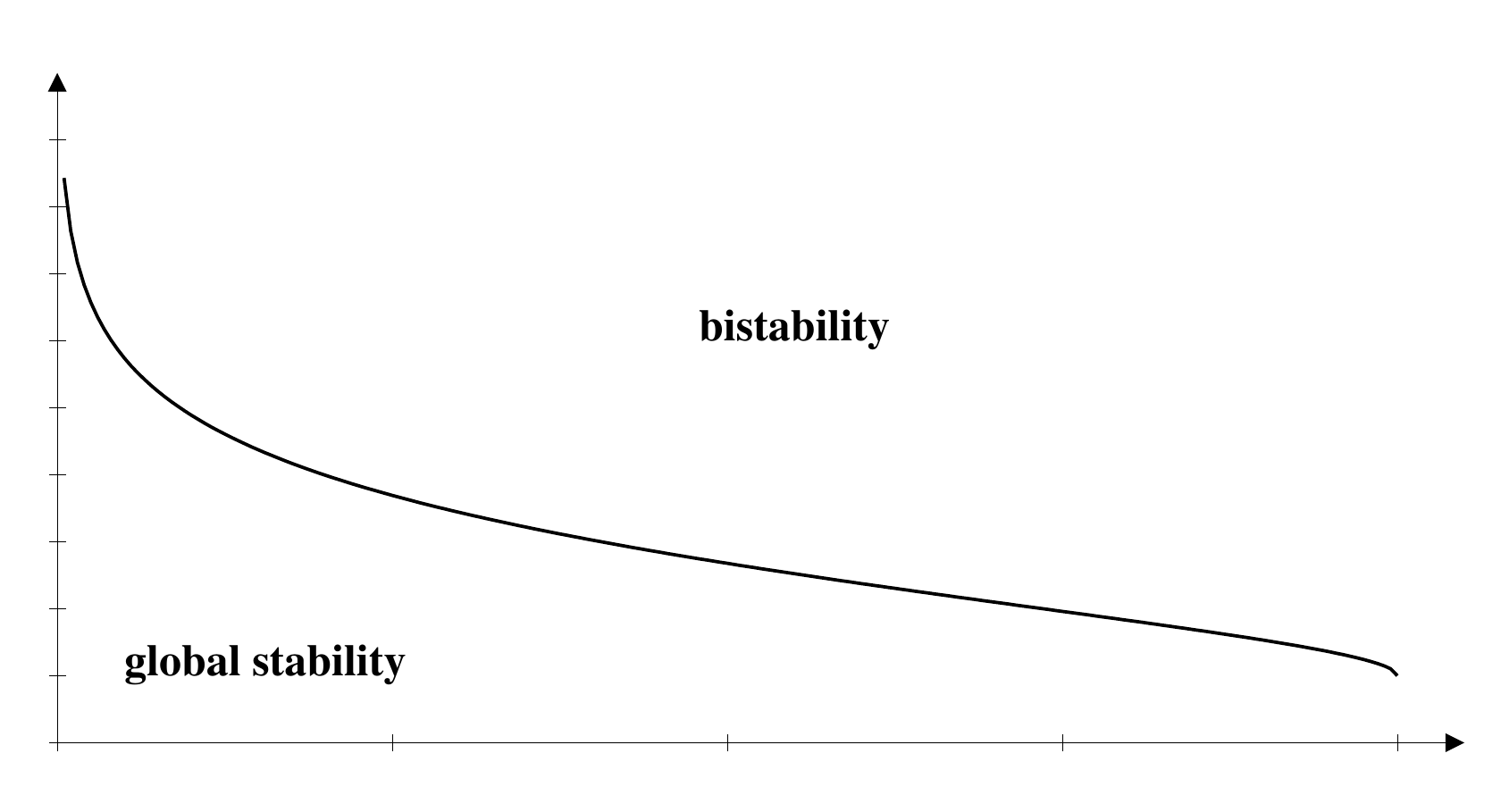_t}}
\caption{\upshape{
 Critical payoff~$a_c (\lambda)$ for all birth rates~$\lambda \in (0, 1]$.
 For all parameter pairs~$(\lambda, a)$ above the graph, the mean-field model is bistable, whereas for all parameter pairs below the graph, the trivial fixed point~0 is globally stable.}}
\label{fig:bistable}
\end{figure}
 It follows from~\eqref{eq:bistable-3} that~$au_0 = x_{\lambda}$ so, using~\eqref{eq:bistable-1}, we deduce that
 $$ \lambda e^{x_{\lambda}} (1 - u_0) = 1 \quad \hbox{and so} \quad u_0 = 1 - e^{- x_{\lambda}} / \lambda. $$
 Using again that~$au_0 = x_{\lambda}$, we conclude that, for all birth rates~$\lambda < 1$, the system is bistable for all payoffs~$a$ larger than the critical payoff
 $$ a_c (\lambda) = \frac{x_{\lambda}}{u_0} = \frac{x_{\lambda}}{1 - e^{- x_{\lambda}} / \lambda} = \frac{x_{\lambda}}{1 - \frac{1}{1 + x_{\lambda}}} = 1 + x_{\lambda}, $$
  where again~$x_{\lambda}$ is the unique positive solution of~$\lambda e^x = 1 + x$.
  Figure~\ref{fig:bistable} shows a picture of the critical payoff~$a_c (\lambda)$ using this characterization. \vspace*{5pt} \\
{\bf Survival phase.}
 It follows from~\eqref{eq:stability} that, when~$\lambda > 1$, the trivial fixed point is unstable, so the population survives~(starting from a positive density,~$u$ converges to a positive limit) even when~$a$ is very large negative.
 This contrasts with Theorem~\ref{th:extinction}, which states that, regardless of the initial configuration~(even density one), the density of occupied sites vanishes to zero.
 However, for all~$\lambda < \infty$ and~$\bar u < 1/2$,
 $$ \begin{array}{rcl} a < \ln (1 / \lambda) / \bar u & \Longrightarrow & \phi (\bar u) < \lambda e^{\ln (1 / \lambda)} \,\bar u (1 - \bar u) - \bar u = \bar u (1 - \bar u) - \bar u \leq 0. \end{array} $$
 This shows that~$\phi (u)$ becomes negative at some stable fixed point~$u_+ \in (0, \bar u)$ therefore, starting at low density, the population converges to a limit that cannot exceeds~$\bar u$.
 In conclusion, when~$\lambda > 1$, there is survival in the sense that the trivial fixed point is unstable, but the limiting density can be made arbitrarily small by taking the payoff~$a$ sufficiently large negative.
 See Figure~\ref{fig:phi} for a picture.

%%%%%%%%%%%%%%%%%%%%%%%%%%%%%%%%%%%%%%%%%%%%%%%%%%%%%%%%%%%%%%%%%%%%%%%%%%%%%%%%%%%%%%%%%%%%%%%%%%%%%%%%%%%%%%%%%%%%%%%%%%%%%%%%%%%%%%%%%%%%%%%%%%%%%%%%%%%%%%%%%%%%%%%%%%%%%%%%%%%%%%%%%%%%%%%%%%

\section{Proof of Theorem \ref{th:coupling} (monotonicity and attractiveness)}
\label{sec:coupling}
\noindent
 The proofs of Theorem~\ref{th:coupling} and~\eqref{eq:monotone} are based on coupling arguments.
 Processes with different parameters and/or initial configurations can be coupled by constructing them jointly on the same graphical representation~\cite{harris_1978}.
 However, the graphical representation of the contact process with a density-dependent birth rate is somewhat complicated, so we use instead the classical comparison result~\cite[Theorem III.1.5]{liggett_1985}, which we restate for the reader's convenience.
 From now on, we let~$c_{i \to j} (x, \xi)$ denote the rate at which site~$x$ jumps from state~$i$ to state~$j$ given that the configuration is~$\xi$.
\begin{theorem}
\label{th:comparison}
 Let~$\xi_t^1$ and~$\xi_t^2$ be two particle systems with state space~$\Omega = \{0, 1 \}^{\Z^d}$, and assume that, whenever~$\xi^1 \subset \xi^2$, we have the inequalities
 $$ c_{0 \to 1} (x, \xi^1) \leq c_{0 \to 1} (x, \xi^2) \quad \hbox{and} \quad c_{1 \to 0} (x, \xi^1) \geq c_{1 \to 0} (x, \xi^2). $$
 Then, there is a coupling of the two processes such that
 $$ \begin{array}{rcl} \xi^1 \subset \xi^2 & \Longrightarrow & \p^{(\xi^1, \xi^2)} [\xi_t^1 \subset \xi_t^2 \ \forall t] = 1. \end{array} $$
\end{theorem}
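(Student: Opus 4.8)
The plan is to prove the result by exhibiting the \emph{basic coupling}: a single Markov process $(\eta_t,\zeta_t)$ on $\Omega \times \Omega$ whose two marginals are distributed as $\xi_t^1$ and $\xi_t^2$, and which never leaves the closed set $\Delta = \{(\eta,\zeta) \in \Omega \times \Omega : \eta \subset \zeta\}$ once started there. Since each process flips a single coordinate at a time, I would build the coupled dynamics site by site. Fix a site $x$ and a pair $(\eta,\zeta) \in \Delta$, and distinguish the possible values of $(\eta(x),\zeta(x))$; the value $(1,0)$ is excluded by $\eta \subset \zeta$, so only $(0,0)$, $(1,1)$, and $(0,1)$ occur. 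When $\eta(x) = \zeta(x) = 0$, I let $x$ flip to $1$ in \emph{both} coordinates at rate $c_{0\to1}(x,\eta)$ and flip to $1$ in $\zeta$ \emph{only} at the additional rate $c_{0\to1}(x,\zeta) - c_{0\to1}(x,\eta)$, which is nonnegative precisely by the first hypothesis; the two possible outcomes $(1,1)$ and $(0,1)$ both lie in $\Delta$. When $\eta(x) = \zeta(x) = 1$, I let $x$ flip to $0$ in both coordinates at rate $c_{1\to0}(x,\zeta)$ and flip to $0$ in $\eta$ only at the additional rate $c_{1\to0}(x,\eta) - c_{1\to0}(x,\zeta) \geq 0$ (the second hypothesis), the outcomes $(0,0)$ and $(0,1)$ being again in $\Delta$. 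Finally, when $(\eta(x),\zeta(x)) = (0,1)$, I let the two coordinates evolve independently at $x$: $\eta(x)$ flips to $1$ at rate $c_{0\to1}(x,\eta)$, producing $(1,1)$, and $\zeta(x)$ flips to $0$ at rate $c_{1\to0}(x,\zeta)$, producing $(0,0)$; since no two jumps occur simultaneously in continuous time, $\Delta$ is again not left.

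With these rates written down, I would check the three properties that make the construction work. First, \emph{marginal consistency}: for each fixed $(\eta,\zeta)$, summing the coupled rates that change $\eta(x)$ from $0$ to $1$ gives $c_{0\to1}(x,\eta)$ in every case, and likewise the coupled rates changing $\eta(x)$ from $1$ to $0$ sum to $c_{1\to0}(x,\eta)$, and similarly for $\zeta$; hence each coordinate is a copy of the corresponding original process. Second, \emph{nonnegativity} of all coupled rates — this is exactly where the two monotonicity hypotheses are used, as indicated above, and is the only place they enter. Third, $\Delta$ is \emph{absorbing}: every transition out of a configuration in $\Delta$ leads back into $\Delta$, so there is no rate exiting $\Delta$; combined with $(\eta_0,\zeta_0) \in \Delta$ and right-continuity of the sample paths (with $\Delta$ closed), this yields $\eta_t \subset \zeta_t$ for all $t$ almost surely.

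The step requiring the most care is the existence and uniqueness of the coupled process $(\eta_t,\zeta_t)$ as a Feller process on the compact space $\Omega \times \Omega$ with the prescribed generator: one must verify that the coupled transition rates inherit whatever regularity conditions (boundedness of the single-site rates, summability of their dependence on distant coordinates) were assumed to make $\xi_t^1$ and $\xi_t^2$ well-defined in the first place. In the application of this paper this is immediate, since the birth rates in~\eqref{eq:birth-rate} are bounded (by $\lambda e^{a \vee 0}$) and of finite range and the death rate is identically one, so the coupled generator trivially satisfies the conditions of \cite[Chapter I]{liggett_1985}. Once existence is granted, the three verifications above give the stated coupling, and this is precisely \cite[Theorem III.1.5]{liggett_1985}.
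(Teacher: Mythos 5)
Your proposal is correct, but note that the paper does not prove this statement at all: it is quoted verbatim from \cite[Theorem III.1.5]{liggett_1985} precisely so that the authors can avoid constructing an explicit coupling for their density-dependent rates. Your basic-coupling argument (maximally synchronized flips on the diagonal cases, independent flips in the $(0,1)$ case, marginal consistency, nonnegativity of the excess rates via the two hypotheses, and $\Delta$ absorbing) is the standard proof of that cited result, so you have in effect reproduced the textbook argument rather than diverged from the paper; your closing remark about existence of the coupled Feller process is the right caveat and is indeed harmless here since the rates are bounded and of finite range.
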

\noindent
 Using this result, we can prove the monotonicity and attractiveness of the contact process with a density-dependent birth rate, which implies~\eqref{eq:monotone}.
\begin{proof}[Proof of~\eqref{eq:monotone}]
 For~$i = 1, 2$, let~$\xi_t^i$ be the process with natural birth rate~$\lambda_i$, payoff coefficient~$a_i$, and initial configuration~$\xi^i$.
 Because the death rate is always equal to one, the second inequality in Theorem~\ref{th:comparison}, which is in fact an equality, is always satisfied.
 To prove the first inequality, recall that the birth rate of the process is of the form
 $$ \begin{array}{c} c_{0 \to 1} (x, \xi) = \sum_{y \sim x} \Phi (y, \xi) \,\xi (y) / 2d = \sum_{y \sim x} \lambda \exp (a f_1 (y, \xi)) \,\xi (y) / 2d = \psi (\lambda, a, \xi). \end{array} $$
 The function~$\psi (\lambda, a, \xi)$ is nondecreasing with respect to~$\lambda$,~$a$, and~$\xi$ when~$a \geq 0$.
 In particular, according to Theorem~\ref{th:comparison}, there is a coupling such that
\begin{equation}
\label{eq:monotone-1}
\begin{array}{rcl}
\lambda_1 \leq \lambda_2, \ \ 0 \leq a_1 \leq a_2, \ \ \xi^1 \subset \xi^2 & \Longrightarrow & \p^{(\xi^1, \xi^2)} [\xi_t^1 \subset \xi_t^2 \ \forall t] = 1. \end{array}
\end{equation}
 Note also that, for all~$\lambda_1 \leq \lambda_2$, $a_1 \leq 0 \leq a_2$, and~$\xi^1 \subset \xi^2$,
 $$ \begin{array}{rcl}
    \psi (\lambda_1, a_1, \xi_1) \leq \sum_{y \sim x} \lambda_1 \,\xi^1 (y) / 2d \leq \sum_{y \sim x} \lambda_2 \,\xi^2 (y) / 2d \leq \psi (\lambda_2, a_2, \xi_2),
    \end{array} $$
 from which we deduce that
\begin{equation}
\label{eq:monotone-2}
\begin{array}{rcl}
\lambda_1 \leq \lambda_2, \ \ a_1 \leq 0 \leq a_2, \ \ \xi^1 \subset \xi^2 & \Longrightarrow & \p^{(\xi^1, \xi^2)} [\xi_t^1 \subset \xi_t^2 \ \forall t] = 1. \end{array}
\end{equation}
 The implication in~\eqref{eq:monotone} directly follows from~\eqref{eq:monotone-1} and~\eqref{eq:monotone-2}.
\end{proof}
\noindent
 Using again Theorem~\ref{th:comparison}, we can also prove Theorem~\ref{th:coupling}.
\begin{proof}[Proof of Theorem~\ref{th:coupling}]
 Let
 $$ \begin{array}{rcl}
     \eta_t & \n = \n & \hbox{contact process with parameter} \ \lambda, \vspace*{4pt} \\
    \zeta_t & \n = \n & \hbox{contact process with parameter} \ \lambda e^{a (1 - 1/2d)}, \end{array} $$
 and let~$\xi_t$ be the contact process with a density-dependent birth rate.
 Observing that the contact process with a density-dependent birth rate and payoff~$a = 0$ reduces to the basic contact process with the same natural birth rate, it follows from~\eqref{eq:monotone-2} that the process~$\xi_t$ dominates~$\eta_t$ when~$a \geq 0$ but is dominated by~$\eta_t$ when~$a \leq 0$.
 Now, to compare the processes~$\xi_t$ and~$\zeta_t$, observe that, when site~$x$ is empty, its neighbors have at most~$2d - 1$ occupied neighbors, therefore
 $$ \begin{array}{rclcl}
      a \leq 0, \ \ \xi \supset \zeta & \Longrightarrow &
      c_{0 \to 1} (x, \xi) & \n \geq \n & \sum_{y \sim x} \lambda e^{a (1 - 1/2d)} \,\xi (y) / 2d = \lambda e^{a (1 - 1/2d)} \,f_1 (x, \xi) \vspace*{8pt} \\ &&
                           & \n \geq \n & \lambda e^{a (1 - 1/2d)} \,f_1 (x, \zeta) = c_{0 \to 1} (x, \zeta), \vspace*{8pt} \\
      a \geq 0, \ \ \xi \subset \zeta & \Longrightarrow &
      c_{0 \to 1} (x, \xi) & \n \leq \n & \sum_{y \sim x} \lambda e^{a (1 - 1/2d)} \,\xi (y) / 2d = \lambda e^{a (1 - 1/2d)} \,f_1 (x, \xi) \vspace*{8pt} \\ &&
                           & \n \leq \n & \lambda e^{a (1 - 1/2d)} \,f_1 (x, \zeta) = c_{0 \to 1} (x, \zeta). \end{array} $$
 In particular, according to Theorem~\ref{th:comparison}, the process~$\xi_t$ dominates~$\zeta_t$ when~$a \leq 0$ but is dominated by~$\zeta_t$ when~$a \geq 0$.
 In conclusion, there are couplings such that
 $$ \begin{array}{rcl}
      a \leq 0, \ \ \eta \supset \xi \supset \zeta & \Longrightarrow & \p^{(\eta, \xi, \zeta)} [\eta_t \supset \xi_t \supset \zeta_t \ \forall t] = 1, \vspace*{4pt} \\
      a \geq 0, \ \ \eta \subset \xi \subset \zeta & \Longrightarrow & \p^{(\eta, \xi, \zeta)} [\eta_t \subset \xi_t \subset \zeta_t \ \forall t] = 1. \end{array} $$
 Because~\cite[Theorem~1]{bezuidenhout_grimmett_1990} implies that~$\eta_t$ survives if and only if~$\lambda > \lambda_c$, and that~$\zeta_t$ survives if and only if~$\lambda > \lambda_c \,e^{- a (1 - 1/2d)}$, the theorem follows.
\end{proof}

%%%%%%%%%%%%%%%%%%%%%%%%%%%%%%%%%%%%%%%%%%%%%%%%%%%%%%%%%%%%%%%%%%%%%%%%%%%%%%%%%%%%%%%%%%%%%%%%%%%%%%%%%%%%%%%%%%%%%%%%%%%%%%%%%%%%%%%%%%%%%%%%%%%%%%%%%%%%%%%%%%%%%%%%%%%%%%%%%%%%%%%%%%%%%%%%%%

\section{Proof of Theorem \ref{th:survival} (survival for~$a$ large positive)}
\label{sec:survival}
\noindent
 This section is devoted to the proof of Theorem~\ref{th:survival}, which states that, for all birth rates~$\lambda > 0$ even small, the process survives provided the payoff coefficient~$a$ is sufficiently large positive.
 Indeed, though it might be likely that isolated players die before they have a chance to give birth, players with at least one neighbor have a large birth rate when~$a$ is large positive, so clusters should expand.
 To turn the argument into a rigorous proof, we use a block construction.
 This technique first appeared in~\cite{bramson_durrett_1988} and is explained in detail in~\cite{durrett_1995}.
 The basic idea is to compare the process properly rescaled in space and time with supercritical oriented site percolation.
 Let
 $$ \Lat = \{(m, n) \in \Z^d \times \N : m_1 + \cdots + m_d + n \ \hbox{is even} \}, $$
 and turn~$\Lat$ into a directed graph~$\vec{\Lat}$ by placing an edge
 $$ (m, n) \to (m', n') \quad \hbox{if and only if} \quad m_i' = m_i \pm 1 \ \hbox{for all} \ i \ \hbox{and} \ n' = n + 1. $$
 Let~$\Lambda_- = \{0, 1 \}^d$ and~$\tau > 0$, and call~$(m, n) \in \Lat$ a good site if
 $$ E_{m, n} = \{\hbox{the box~$m + \Lambda_-$ is fully occupied at time~$n\tau$} \} \quad \hbox{occurs},$$
 where $m + \Lambda_- = \{m + x : x \in \Lambda_- \}.$
 The objective is to prove that, for all~$\ep > 0$, time~$\tau$ can be chosen in such a way that the set of good sites dominates the set of wet sites in an oriented site percolation process on~$\vec{\Lat}$ with parameter~$1 - \ep$.
 To do this, by translation invariance of the process in space and time, it suffices to prove that
\begin{equation}
\label{eq:double}
\begin{array}{rcl}
\xi \supset \Lambda_- & \Longrightarrow & \p_{\lambda, a}^{\xi} [\xi_{\tau} \supset \Lambda_+] \geq 1 - \ep \quad \hbox{where} \quad \Lambda_+ = \{-1, 0, 1, 2 \}^d, \end{array}
\end{equation}
 regardless of the configuration of the process outside the cube~$\Lambda_+$.
 Since in addition our process is attractive when~$a \geq 0$ and the birth rate of players with at least one occupied neighbor is always larger than~$\lambda e^{a / 2d}$, it suffices to prove the implication~\eqref{eq:double} for the new process~$\bar \xi_t$ with transition rates
 $$ \begin{array}{c} c_{0 \to 1} (x, \bar \xi_t) = \sum_{y \sim x} \lambda e^{a / 2d} \,\ind \{f_1 (y, \bar \xi_t) \neq 0 \} \,\bar \xi_t (y) / 2d \quad \hbox{and} \quad c_{1 \to 0} (x, \bar \xi_t) = 1, \end{array} $$
 modified so that births outside~$\Lambda_+$ are suppressed.
 The transition rates indicate that isolated players cannot give birth, while players with at least one occupied neighbor give birth at rate~$\lambda e^{a / 2d}$.
 This process can be constructed graphically as follows:
\begin{itemize}
 \item {\bf Births}.
       Equip each directed edge of the lattice~$\vec{xy}$, $x \sim y$, with an exponential clock with rate~$\lambda e^{a / 2d} / 2d$.
       At the times~$t$ the clock rings, draw an arrow~$(x, t) \to (y, t)$ to indicate that, if site~$x$ is occupied and has at least one occupied neighbor, and site~$y$ is empty, then site~$y$ becomes occupied. \vspace*{4pt}
 \item {\bf Deaths}.
       Equip each vertex~$x$ of the lattice with an exponential clock with rate one.
       At the times~$t$ the clock rings, put a cross~$\times$ at~$(x, t)$ to indicate that, if site~$x$ is occupied, then it becomes empty.
\end{itemize}
 We denote by~$\bar \p_{\lambda, a}$ the law of this process.
 To prove survival, we first show that, with high probability, there are no death marks~$\times$ in~$\Lambda_+$ by some small time~$\tau$.
\begin{lemma}
\label{lem:death}
 For all~$\ep > 0$, there exists~$\tau > 0$ such that
 $$ \bar \p_{\lambda, a} [\hbox{no death marks~$\times$ in~$\Lambda_+ \times [0, \tau]$}] \geq 1 - \ep/2. $$
\end{lemma}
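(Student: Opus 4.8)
The plan is to control the finitely many death clocks that can produce a cross inside the finite box $\Lambda_+$ during the time interval $[0,\tau]$. By construction, each vertex $x \in \Z^d$ carries an independent rate-one Poisson clock generating the death marks $\times$, so the number of death marks in $\Lambda_+ \times [0,\tau]$ is a Poisson random variable whose parameter is $|\Lambda_+|\,\tau = 4^d \tau$, since $\Lambda_+ = \{-1,0,1,2\}^d$ has cardinality $4^d$. Equivalently, by independence across sites, the probability that no such clock rings before time $\tau$ is exactly
$$ \bar \p_{\lambda,a}[\hbox{no death marks $\times$ in $\Lambda_+ \times [0,\tau]$}] = \prod_{x \in \Lambda_+} e^{-\tau} = e^{-4^d \tau}. $$

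The key step is then simply to choose $\tau$ small. First I would fix $\ep > 0$. Then I would observe that $e^{-4^d \tau} \to 1$ as $\tau \downarrow 0$, so there exists $\tau = \tau(\ep,d) > 0$ small enough that $e^{-4^d \tau} \geq 1 - \ep/2$; explicitly, any $\tau \leq 4^{-d}\ln\bigl(1/(1-\ep/2)\bigr)$ works, and one could even use the cruder bound $e^{-4^d\tau} \geq 1 - 4^d\tau$ to take $\tau = \ep/(2\cdot 4^d)$. For this value of $\tau$, the displayed lower bound of the lemma holds. Note that this choice of $\tau$ depends only on $\ep$ and the dimension $d$, and in particular not on $\lambda$ or $a$, which is important since in later steps $\tau$ must be usable uniformly as the payoff $a$ is taken large.

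There is essentially no obstacle here: the statement is an immediate consequence of the elementary fact that finitely many independent exponential clocks, each of rate one, all fail to ring in $[0,\tau]$ with probability $e^{-|\Lambda_+|\tau}$, together with continuity of this expression at $\tau = 0$. The only point requiring a word of care is that the death marks in the graphical representation of $\bar\xi_t$ are governed by the same rate-one vertex clocks as in the basic contact process (the birth modification affects only the arrows, not the crosses), so the count over the finite region $\Lambda_+$ is genuinely Poisson with the stated parameter regardless of the values of $\lambda$ and $a$. This being settled, the lemma follows.
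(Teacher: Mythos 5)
Your proposal is correct and matches the paper's proof: both observe that the number of death marks in $\Lambda_+\times[0,\tau]$ is Poisson with parameter $4^d\tau$, so the probability of none is $e^{-4^d\tau}$, and then choose $\tau = -\ln(1-\ep/2)/4^d$ (equivalently, your $\tau \le 4^{-d}\ln(1/(1-\ep/2))$). Your added remark that $\tau$ depends only on $\ep$ and $d$, not on $\lambda$ or $a$, is a correct and useful observation consistent with how $\tau$ is used later.
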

\begin{proof}
 The number~$D_{\tau}$ of death marks in the space-time block~$\Lambda_+ \times [0, \tau]$ is Poisson distributed with parameter~$4^d \tau$, so the probability of no death marks satisfies
\begin{equation}
\label{eq:death}
\begin{array}{rcl} \p [D_{\tau} = 0] = e^{- 4^d \tau} = 1 - \ep/2 & \Longleftrightarrow &\tau = - \ln (1 - \ep / 2) / 4^d > 0. \end{array}
\end{equation}
 This completes the proof.
\end{proof}
\noindent
 The next step is to prove invasion~$\Lambda_- \to \Lambda_+$ by time~$\tau$.
 Because the players in~$\Lambda_-$ cannot immediately give birth onto the corners of~$\Lambda_+$ when $d > 1$, we divide the problem into~$d$ steps by considering the sequence of spatial regions
 $$ \begin{array}{c} \Lambda_i = \{y \in \Lambda_+ : \min_{x \in \Lambda_-} \norm{x - y}_1 = i \} \quad \hbox{for} \quad i = 0, 1, \ldots, d. \end{array} $$
 Figure~\ref{fig:Lambda} shows a picture in the~$d = 2$ case.
 Note that
 $$ \Lambda_- = \Lambda_0 \quad \hbox{and} \quad \Lambda_+ = \Lambda_0 \cup \Lambda_1 \cup \cdots \cup \Lambda_d. $$
 In addition, each site in one region has at least one neighbor in the previous region, therefore a fully occupied region can immediately invade the next region.
 We are now ready to prove invasion with probability close to one.
\begin{figure}[t!]
\centering
\scalebox{0.40}{\input{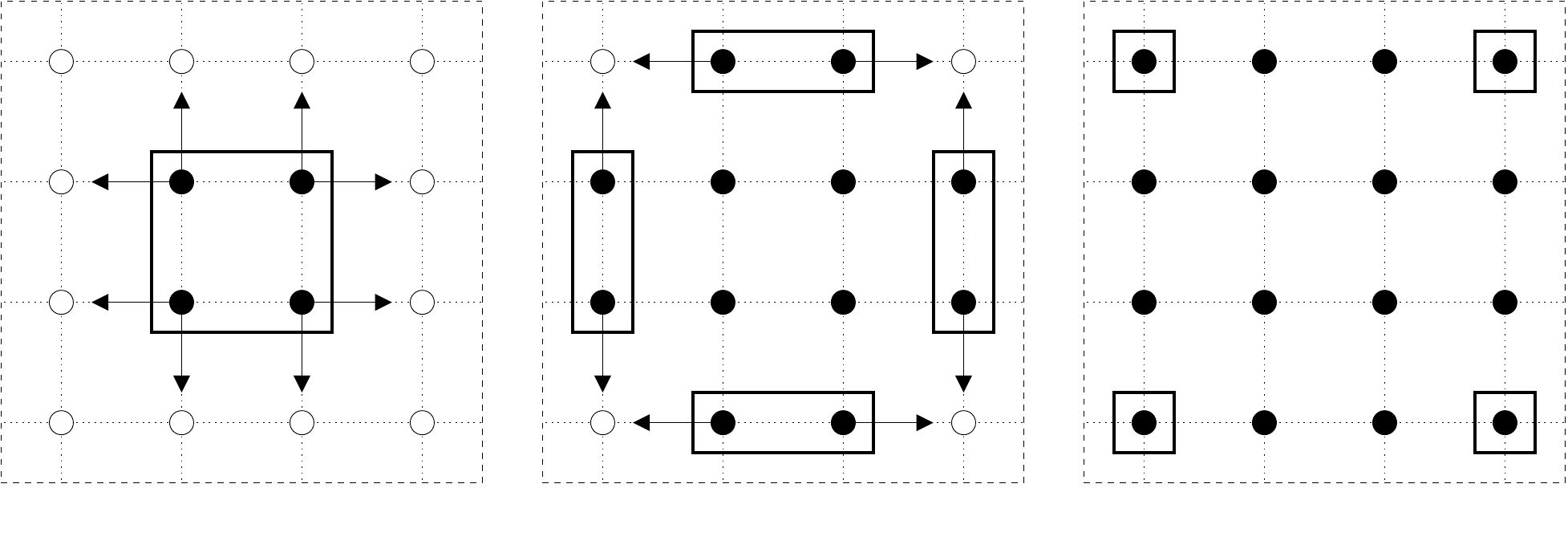_t}}
\caption{\upshape{
 Illustration of the~$d$-step process used in the proof of Theorem~\ref{th:survival}.
 From left to right, the region delimited with bold lines represents the sets~$\Lambda_0$, $\Lambda_1$, and~$\Lambda_2$, respectively.
 In all three pictures, the dashed box represents~$\Lambda_+$.}}
\label{fig:Lambda}
\end{figure}
\begin{lemma}
\label{lem:birth}
 For all~$\ep > 0$ and $\tau > 0$ like in~\eqref{eq:death}, there exists~$a_+ < \infty$ such that
 $$ \bar \p_{\lambda, a}^{\Lambda_-} [\bar \xi_{\tau} = \Lambda_+] \geq 1 - \ep \quad \hbox{for all} \quad a \geq a_+. $$
\end{lemma}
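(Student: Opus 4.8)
The plan is to condition on the good event from Lemma~\ref{lem:death}, namely that there are no death marks in $\Lambda_+ \times [0, \tau]$, which has probability at least $1 - \ep/2$. On this event no occupied site in $\Lambda_+$ can become empty before time $\tau$, so the only thing that can go wrong is that the births fail to propagate $\Lambda_- = \Lambda_0$ all the way out to $\Lambda_+ = \Lambda_0 \cup \cdots \cup \Lambda_d$ in time. I would then decompose the time interval $[0,\tau]$ into $d$ equal subintervals of length $\tau/d$ and show that, for $a$ large, with high probability the region $\Lambda_i$ is fully occupied by time $i\tau/d$ for every $i = 0, 1, \ldots, d$. The key structural fact, already noted before the lemma, is that every site $y \in \Lambda_i$ has at least one neighbor $x \in \Lambda_{i-1}$; and once $\Lambda_{i-1}$ is fully occupied (at time $(i-1)\tau/d$, say), that neighbor $x$ is occupied and itself has an occupied neighbor inside $\Lambda_{i-1}$, so in the graphical representation the birth arrow along $\vec{xy}$ is "active" and will turn $y$ occupied as soon as its clock rings.

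**The main estimate.** The heart of the matter is therefore the following: given that $\Lambda_{i-1}$ is fully occupied by time $(i-1)\tau/d$ and no deaths occur in $\Lambda_+ \times [0,\tau]$, every site $y \in \Lambda_i$ becomes occupied within the next $\tau/d$ units of time with probability close to one. For a fixed such $y$, pick one neighbor $x \in \Lambda_{i-1}$; the arrow $(x,\cdot)\to(y,\cdot)$ rings according to a rate $\lambda e^{a/2d}/2d$ Poisson clock, so the probability that it has not rung during an interval of length $\tau/d$ is $\exp(-\lambda e^{a/2d}\tau/(2d^2))$, which tends to $0$ as $a \to \infty$. Taking a union bound over the at most $4^d$ sites $y \in \Lambda_i$ and over the $d$ stages, I would choose $a_+ = a_+(\lambda, d, \tau)$ large enough that this total failure probability is at most $\ep/2$. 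Combined with the $\ep/2$ from Lemma~\ref{lem:death}, this gives $\bar\p_{\lambda,a}^{\Lambda_-}[\bar\xi_\tau = \Lambda_+] \geq 1 - \ep$ for all $a \geq a_+$, since on the intersection of the no-death event and the successful-propagation event every site of $\Lambda_+$ is occupied at time $\tau$ and (no deaths) nothing has turned off — and births outside $\Lambda_+$ are suppressed by construction, so $\bar\xi_\tau$ is exactly $\Lambda_+$.

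**Obstacles and care points.** The argument is essentially a soft one, but two points need care. First, one must be sure that when we say "$\Lambda_{i-1}$ fully occupied implies the arrow from $x\in\Lambda_{i-1}$ to $y\in\Lambda_i$ is active", the source site $x$ really does have an occupied neighbor at the relevant moment: this is guaranteed because $\Lambda_{i-1}$ is itself connected enough that any $x\in\Lambda_{i-1}$ has a neighbor in $\Lambda_{i-1}\cup\Lambda_{i-2}$, which is occupied under our induction hypothesis (for $i=1$, $x\in\Lambda_0=\Lambda_-$ has a neighbor in $\Lambda_-$ since $\Lambda_-=\{0,1\}^d$ contains more than one point when $d\ge 1$, and when the initial configuration is all of $\Lambda_-$ this holds at time $0$). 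Second, the no-death event is used globally on $[0,\tau]$, not stage by stage, so the stages are genuinely nested: once a site is occupied it stays occupied through time $\tau$, and thus the "fully occupied by time $i\tau/d$" events can only help later stages. The only real subtlety is bookkeeping the independence: the $d$ Poisson clocks used in the $d$ successive stages sit on disjoint time intervals, hence are independent, and within a stage the relevant clocks sit on distinct edges; so the union bound is legitimate and no correlation issues arise. I expect the write-up to be short, with the choice of $a_+$ being the only quantitative step.
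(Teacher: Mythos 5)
Your proof is correct and follows essentially the same route as the paper's: the $d$-stage invasion $\Lambda_i \to \Lambda_{i+1}$ over successive time windows of length $\tau/d$, the single-arrow failure probability $\exp(-\lambda \tau e^{a/2d}/2d^2)$ driven to zero as $a \to \infty$, and a union bound over the at most $4^d$ sites per stage combined with the no-death event of Lemma~\ref{lem:death}. Your explicit verification that each source site in $\Lambda_{i-1}$ has an occupied neighbor (so that its birth arrows are active in the modified process $\bar \xi_t$) is a detail the paper leaves implicit but is a worthwhile care point.
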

\begin{proof}
 As previously mentioned, we proceed in~$d$ steps, and prove that, conditional on no deaths, each invasion~$\Lambda_i \to \Lambda_{i + 1}$ occurs in less then~$\tau / d$ units of time with high probability.
 Given that there are no deaths by time~$\tau$, and that~$\Lambda_i$ is fully occupied, the probability that~$\Lambda_{i + 1}$ becomes fully occupied in less than~$\tau / d$ units of time is larger than the probability that, for each site~$y \in \Lambda_{i + 1}$, there is at least one birth arrow~$x \to y$ for some~$x \in \Lambda_i$ in this time window.
 Because each region has less than~$4^d$ sites, and birth arrows occur along each directed edge at rate~$\lambda e^{a / 2d} / 2d$, this is larger than the probability that~$4^d$ independent exponential random variables~$X_1, X_2, \ldots, X_{4^d}$ with that rate are all less than~$\tau / d$.
 In particular, for~$i = 0, 1, \ldots, d - 1$,
\begin{equation}
\label{eq:birth}
\begin{array}{l}
\bar \p_{\lambda, a} [\bar \xi_t \supset \Lambda_{i + 1} \ \forall t \in [(i + 1) \tau / d, \tau] \,| \,\bar \xi_{i \tau / d} = \Lambda_i, D_{\tau} = 0] \vspace*{6pt} \\ \hspace*{34pt} =
\bar \p_{\lambda, a}^{\Lambda_i} [\bar \xi_{\tau / d} \supset \Lambda_{i + 1} \,| \,D_{\tau} = 0] \geq
\p [X_1, X_2, \ldots, X_{4^d} \leq \tau / d] \vspace*{6pt} \\ \hspace*{34pt} =
   (1 - \exp (- \lambda \tau e^{a / 2d} / 2d^2))^{4^d}. \end{array}
\end{equation}
 Because the right-hand side increases to one as~$a$ goes to infinity, there exists~$a_+$ large positive but finite such that the probability above is~$\geq 1 - \ep / 2d$ for all payoffs larger than~$a_+$.
 In particular, it follows from~\eqref{eq:birth} and Lemma~\ref{lem:death} that
 $$ \begin{array}{rcl}
    \bar \p_{\lambda, a}^{\Lambda_-} [\bar \xi_{\tau} \neq \Lambda_+] & \n \leq \n &
    \bar \p_{\lambda, a}^{\Lambda_-} [\bar \xi_{\tau} \neq \Lambda_+ \,| \,D_{\tau} = 0] + \p [D_{\tau} \neq 0] \vspace*{6pt} \\ & \n \leq \n &
    \sum_{i < d} \bar \p_{\lambda, a}^{\Lambda_i} [\bar \xi_{\tau / d} \not \supset \Lambda_{i + 1} \,| \,D_{\tau} = 0] + \p [D_{\tau} \neq 0] \vspace*{6pt} \\ & \n \leq \n &
    d (1 - (1 - \ep / 2d)) + \ep / 2 = \ep \end{array} $$
 for all~$a \geq a_+$.
 This proves the lemma.
\end{proof}
\noindent
 Using the lemma, we are now ready for the block construction.
\begin{lemma}
\label{lem:perco}
 Let~$\tau$ as in Lemma~\ref{lem:death} and~$a_+$ as in Lemma~\ref{lem:birth}.
 Then, for all~$a \geq a_+$, the set of good sites dominates stochastically the set of open sites in an oriented site percolation process on~$\vec{\Lat}$ with parameter~$1 - \ep$.
\end{lemma}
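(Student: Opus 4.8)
\section*{Proof proposal for Lemma~\ref{lem:perco}}

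The plan is to check the hypotheses of the standard comparison between a rescaled interacting particle system and supercritical oriented site percolation, as developed in~\cite{bramson_durrett_1988} and explained in~\cite{durrett_1995}. The renormalized lattice is~$\vec{\Lat}$, a good site~$(m, n)$ is encoded by the event~$E_{m, n}$ that~$m + \Lambda_-$ is fully occupied at time~$n \tau$, and the children of~$(m, n)$ in~$\vec{\Lat}$ are the sites~$(m + v, n + 1)$ with~$v \in \{-1, 1 \}^d$. The key geometric observation is that, for every such~$v$,
$$ (m + v) + \Lambda_- \subseteq m + \Lambda_+, \qquad \Lambda_+ = \{-1, 0, 1, 2 \}^d, $$
because in coordinate~$i$ the left-hand box is~$m_i + \{-1, 0 \}$ or~$m_i + \{1, 2 \}$ according to the sign of~$v_i$, while the right-hand box is~$m_i + \{-1, 0, 1, 2 \}$. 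Hence, if~$m + \Lambda_+$ is fully occupied at time~$(n + 1) \tau$, then all~$2^d$ children events~$E_{m + v, n + 1}$ hold simultaneously. It therefore suffices to show that, conditionally on~$E_{m, n}$, the event~$\{m + \Lambda_+ \hbox{ fully occupied at } (n + 1) \tau \}$ has probability at least~$1 - \ep$, and that this propagation is driven by a bounded, translation-covariant piece of the graphical representation.

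For the propagation estimate, I would work with the (translated) confined process~$\bar \xi_t$ introduced above, with births suppressed outside the box~$m + \Lambda_+$ and run over the window~$[n \tau, (n + 1) \tau]$; let~$H_{m, n}$ be the event that, started from~$m + \Lambda_-$ fully occupied at time~$n \tau$, this process has~$m + \Lambda_+$ fully occupied at time~$(n + 1) \tau$. By translation invariance of the Poisson clocks in space and time and by~\eqref{eq:double} — which is exactly Lemmas~\ref{lem:death} and~\ref{lem:birth} combined — we have~$\bar \p_{\lambda, a} [H_{m, n}] \geq 1 - \ep$ for every~$a \geq a_+$. Two facts then finish the argument: first, the event~$H_{m, n}$ is measurable with respect to the birth arrows and death crosses inside the fixed-size space-time block~$(m + \Lambda_+) \times [n \tau, (n + 1) \tau]$, hence — by independence of the increments of the Poisson clocks — independent of the graphical representation before time~$n \tau$, and in particular of whether~$E_{m, n}$ occurs; second, since~$a \geq 0$ the process is attractive and dominates~$\bar \xi_t$ (the reduction already carried out before Lemma~\ref{lem:death}), so on~$E_{m, n} \cap H_{m, n}$ the unrestricted process has~$m + \Lambda_+$, and therefore each box~$(m + v) + \Lambda_-$, fully occupied at time~$(n + 1) \tau$. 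Combining these, conditionally on~$E_{m, n}$ all children events~$E_{m + v, n + 1}$ occur with probability at least~$1 - \ep$, with the conditional randomness supported on a bounded translate of a fixed region.

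The blocks~$(m + \Lambda_+) \times [n \tau, (n + 1) \tau]$ are pairwise disjoint unless~$(m, n)$ and~$(m', n')$ are equal or neighbors in~$\vec{\Lat}$, so the family~$\{H_{m, n} \}$ is finite-range dependent; the comparison theorem of~\cite{durrett_1995} then produces a coupling under which the set of good sites contains the wet cluster of a finite-range dependent — and hence, on taking~$\ep$ small, or after enlarging the blocks so that the relevant regions become genuinely disjoint, effectively Bernoulli — oriented site percolation on~$\vec{\Lat}$ with parameter~$1 - \ep$, which is the assertion. I expect the only real obstacle to be bookkeeping rather than ideas: the event~$E_{m, n}$ is not itself local, so one must route the argument through the confined process~$\bar \xi_t$ and use attractiveness and monotonicity in the correct direction so that the full process inherits the propagation, and one must verify that the defining space-time regions overlap only for honest renormalized neighbors, so that the resulting oriented percolation has the short-range dependence structure required to invoke the comparison theorem.
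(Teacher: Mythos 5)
Your proposal is correct and follows essentially the same route as the paper: domination of the confined process~$\bar \xi_t$, translation invariance, the containment~$(m' + \Lambda_-) \subset m + \Lambda_+$ combined with Lemma~\ref{lem:birth} to get the propagation estimate, and the comparison theorem of~\cite{durrett_1995}. Your additional remarks on measurability of the propagation events in a bounded space-time block and on finite-range dependence are points the paper defers to the paragraph following the lemma rather than to its proof, so there is no substantive difference.
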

\begin{proof}
 Because~$\xi_t$ dominates~$\bar \xi_t$, and the evolution rules of the process~$\xi_t$ are translation invariant, it follows from Lemma~\ref{lem:birth} that, for all sites~$(m', n') \leftarrow (m, n)$,
 $$ \begin{array}{l}
    \p_{\lambda, a} [\xi_{(n + 1) \tau} \supset m' + \Lambda_- \,| \,\xi_{n \tau} \supset m + \Lambda_-] \vspace*{4pt} \\ \hspace*{50pt} \geq
    \p_{\lambda, a} [\xi_{(n + 1) \tau} \supset m + \Lambda_+ \,| \,\xi_{n \tau} \supset m + \Lambda_-] \geq 1 - \ep \quad \hbox{for all} \quad a \geq a_+. \end{array} $$
 This shows the existence of a collection of good events~$G_{m, n}$ such that
 $$ \hbox{(a)} \ \p_{\lambda, a} [G_{m, n}] \geq 1 - \ep \quad \hbox{and} \quad \hbox{(b)} \ E_{m, n} \cap G_{m, n}\subset E_{m', n'} \ \ \hbox{for all} \ \ (m', n') \leftarrow (m, n). $$
 The lemma then follows from~\cite[Theorem~A.4]{durrett_1995}
\end{proof}
\noindent
 To deduce the theorem, start~$\xi_t$ with a single player at the origin.
 Because~$\lambda > 0$, there is a positive probability that the small box~$\Lambda_-$ is fully occupied at time one, so we may assume without loss of generality that the process starts from~$\xi_0 = \Lambda_-$ instead.
 In addition, because Lemma~\ref{lem:birth} applies to the modified process~$\bar \xi_t$, the events~$G_{m, n}$ in Lemma~\ref{lem:perco} can be made measurable with respect to the graphical representation in a bounded space-time block.
 This shows that the range of dependence of the oriented site percolation process is finite, so we can fix~$\ep > 0$ small to make the percolation process supercritical.
 Since in addition, according to Lemma~\ref{lem:perco}, the set of good sites dominates the set of wet sites for all~$a \geq a_+$, the process survives.

%%%%%%%%%%%%%%%%%%%%%%%%%%%%%%%%%%%%%%%%%%%%%%%%%%%%%%%%%%%%%%%%%%%%%%%%%%%%%%%%%%%%%%%%%%%%%%%%%%%%%%%%%%%%%%%%%%%%%%%%%%%%%%%%%%%%%%%%%%%%%%%%%%%%%%%%%%%%%%%%%%%%%%%%%%%%%%%%%%%%%%%%%%%%%%%%%%

\section{Proof of Theorem \ref{th:extinction} (extinction for~$a$ large negative)}
\label{sec:extinction}
\noindent
 This section is devoted to the proof of Theorem~\ref{th:extinction}, which states that, for all~$\lambda < \infty$ even large, the process dies out if the payoff~$a$ is sufficiently large negative.
 Like in Theorem~\ref{th:survival}, the basic idea is that isolated players and players with at least one neighbor may have fitnesses that differ strongly, except that the effects are now reversed:
 though it might be likely that isolated players give birth quickly, once they give birth and have one neighbor~(their offspring), they are no longer likely to give birth when~$a$ is large negative.
 The proof again relies on a block construction, but the technical details are somewhat more complicated.
 To begin with, we consider the process starting with a single individual in the limit~$a = - \infty$.
 In this case, adjacent individuals cannot give birth and the process essentially behaves like a symmetric random walk that dies after a geometric number of jumps.
 This implies that the length of the invasion path decays exponentially in both space and time~(radius and time to extinction).
 Because the birth rate decreases with the local density of individuals, the process starting from a general configuration is dominated stochastically by a system of such independent random walks starting with one particle per site.
 The exponential decay implies that, regardless of the configuration outside a large space-time block and with probability close to one, a large space-time region around the center of the block is not reached by any of the random walks/invasion paths~(dead region).
 This is used to prove percolation of the dead regions under a suitable space-time rescaling.
 Once the space and time scales are fixed, we can use a perturbation argument to prove that percolation still occurs for all~$a$ sufficiently large negative.
 % ~$a \> - \infty$ sufficiently small.
 Extinction of the process then follows from the fact that the percolation parameter can be chosen close enough to one to ensure the lack of percolation of the closed sites~(corresponding to potentially occupied blocks), and the fact that individuals in our process cannot appear spontaneously. \\

% % % % % % % % % % % % % % % % % % % % % % % % % % % % % % % % % % % % % % % % % % % % % % % % % % % % % % % % % % % % % % % % % % % % % % % % % % % % % % % % % % % % % % % % % % % % % % % % % 

\noindent{\bf Fast extinction for $a = - \infty$.}
 We first study the limiting case~$a = - \infty$ starting with a single player at the origin, whose law is denoted by~$\p_{\lambda, - \infty}^0$. Let
 $$ T = \inf \{t : \xi_t = \varnothing \} = \hbox{time to extinction}. $$
\begin{lemma}
\label{lem:exp-decay-time}
 For every $\lambda \geq 0$, there exists a constant $\beta = \beta(\lambda,d) > 0$ such that 
 $$ \p_{\lambda,-\infty}^0 [T \geq t] \leq e^{-\beta t} \quad \hbox{for all} \quad t \geq 0. $$
\end{lemma}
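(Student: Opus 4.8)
The plan is to exploit the extremely rigid structure of the dynamics at $a = -\infty$ when the process starts from a single particle. In this limit the birth rate $\lambda e^{a f_1 (x, \xi)}$ equals $\lambda$ at a site with no occupied neighbor and equals $0$ at any site that has an occupied neighbor, so only isolated particles reproduce. Starting from one particle at the origin, the occupied set therefore always consists of either a single site or two adjacent sites, until it becomes empty: a lone particle dies at rate $1$ or sends an offspring onto a uniformly chosen neighbor at rate $\lambda$, producing an adjacent pair; an adjacent pair cannot reproduce and, once one of its two particles dies (at total rate $2$), reverts to a lone particle. Consequently $|\xi_t|$ is a continuous-time Markov chain on $\{0, 1, 2 \}$ with $0$ absorbing and jump rates $1 \to 0$ at rate $1$, $1 \to 2$ at rate $\lambda$, and $2 \to 1$ at rate $2$, and the time to extinction $T$ is exactly the absorption time of this finite chain started from state $1$. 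I would prove the exponential bound directly for this chain.

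For the tail estimate the cleanest route is a fixed-chance-of-extinction-per-unit-time argument. First I would observe that there is a constant $p = p (\lambda) \in (0, 1)$ such that
$$ \p_{\lambda, -\infty} [\,|\xi_{s+1}| = 0 \,|\, |\xi_s| = j\,] \geq p \qquad \hbox{for} \quad j \in \{1, 2 \}, $$
since, conditionally on $|\xi_s| = j$, the event that no birth mark falls in the relevant part of $(s, s + 1]$ and that all (at most two) particles present at time $s$ die within that unit interval has probability bounded below by a quantity depending only on $\lambda$. Iterating this bound with the Markov property at integer times gives $\p_{\lambda, -\infty}^0 [T > n] \leq (1 - p)^n$ for all $n \in \N$, hence $\p_{\lambda, -\infty}^0 [T \geq t] \leq (1 - p)^{\lfloor t \rfloor}$, which is at most $e^{-\beta t}$ for a suitable $\beta = \beta (\lambda, d) > 0$: for $t \geq 1$ one decreases $\beta$ slightly to absorb the floor, while for $t \in [0, 1)$ one uses that $\p_{\lambda, -\infty}^0 [T \leq t]$ is bounded below by a positive multiple of $t$ (the first jump being a death with probability $1/(1 + \lambda)$, at an exponential time of rate $1 + \lambda$), so that $1 - e^{-\beta t} \leq \p_{\lambda, -\infty}^0 [T \leq t]$ there once $\beta$ is small. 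An equivalent route is to write $T$ as a geometric sum of independent holding times --- the number of visits to state $1$ before absorption is geometric with parameter $1/(1 + \lambda)$, each visit contributing an exponential time of rate $1 + \lambda$ followed, except for the last, by an exponential time of rate $2$ --- and to conclude from the finiteness of the moment generating function of such a geometric sum together with a Chernoff bound; the elementary version above is the one I would keep in the text.

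The only genuinely delicate step is the structural claim that the occupied set never exceeds two sites, which is where the single-particle initial condition in the statement is essential: it has to be read off carefully from the graphical construction (a birth mark never fires at a site that currently has an occupied neighbor, so a third particle is never created, and the two particles of a pair are always adjacent since an offspring is placed on a neighbor of its parent). This confinement fails for a general initial configuration, but that is not needed here: later in the section the process started from an arbitrary configuration is instead dominated stochastically by independent copies of this one-particle ``walk'', one per initially occupied site, to which the present lemma is applied. I expect that bookkeeping --- making the confinement, and later the domination, precise --- rather than any analytic difficulty, to be the main obstacle.
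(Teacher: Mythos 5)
Your proposal is correct, and its structural core---the observation that at $a=-\infty$ the process started from a single particle never occupies more than two adjacent sites, so that $|\xi_t|$ is an absorbing Markov chain on $\{0,1,2\}$ with rates $1\to 0$ at rate $1$, $1\to 2$ at rate $\lambda$, $2\to 1$ at rate $2$, and a geometric number of excursions to state $2$---is exactly the skeleton of the paper's proof. Where you diverge is in how the exponential tail is extracted from that skeleton. The paper writes $T$ as the hypoexponential sum $S_0+\sum_{i=1}^N(T_i+S_i)$ with $N$ geometric of parameter $1/(1+\lambda)$, dominates it by a $\mathrm{Gamma}(2N+1,1)$ variable, and combines a Chernoff bound for the Gamma tail with the geometric tail of $N$; this is precisely the route you mention only as an ``equivalent'' alternative. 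Your primary argument instead establishes a uniform lower bound $p>0$ on the probability of reaching the absorbing state within one unit of time from either state $1$ or $2$, and iterates via the Markov property at integer times to get $\p_{\lambda,-\infty}^0[T>n]\le(1-p)^n$, patching the range $t\in[0,1)$ with the elementary bound $\p_{\lambda,-\infty}^0[T\le t]\ge ct$. This is more elementary (no moment generating functions, no optimization over the Chernoff parameter and no union bound over the values of $N$), at the cost of a less explicit rate $\beta$; since only the positivity of $\beta$ is used later in the block construction, nothing is lost. Your identification of the confinement to at most two particles as the one point requiring care from the graphical representation, and of the role of the single-site initial condition (with the general case handled later by domination by non-interacting copies), matches the paper's logic exactly.
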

\begin{proof}
 Because~$a = -\infty$, two adjacent players cannot give birth, so there can be at most two players alive at the same time.
 By the superposition property, the time until one of the two players dies is distributed as an exponential random variable with parameter two.
 In addition, if there is just a single player, the probability that it dies before it reproduces is equal to~$1 / (1 + \lambda)$.
 Therefore, the total number of generations~$N$ in which exactly two players are alive is geometrically distributed:
\begin{equation}
\label{eq:exp-decay-time-1}
\p_{\lambda, -\infty}^0 [N > n] \leq \left(1 - \frac{1}{1 + \lambda} \right)^n = \left(\frac{\lambda}{1 + \lambda} \right)^n = \bigg(1 + \frac{1}{\lambda} \bigg)^{-n}, \quad n \in \N.
\end{equation}
 Note also that the time increments between consecutive generations can be written rigorously using the sequence of stopping times~$\tau_0, \tau_1, \ldots, \tau_{2N + 1}$, starting at~$\tau_0 = 0$, defined recursively as
 $$ \begin{array}{rclcl}
    \tau_{2i + 1} & \n = \n & \inf \{t > \tau_{2i} : \hbox{0 or 2 players alive at time} \ t \} & \hbox{for} & i = 0, 1, \ldots, N, \vspace*{4pt} \\
    \tau_{2i + 2} & \n = \n & \inf \{t > \tau_{2i + 1} : \hbox{1 player alive at time} \ t \}   & \hbox{for} & i = 0, 1, \ldots, N - 1.
 \end{array} $$
 In particular, the time to extinction~$T = \tau_{2N + 1}$ of the process can be written as a random sum of independent exponential random variables:
 $$ T = \tau_{2N + 1} = \tau_1 + \sum_{i = 1}^{2N} \ (\tau_{i + 1} - \tau_i) \stackrel{\text{d}}{=} S_0 + \sum_{i = 1}^N \ (T_i + S_i), $$
 where the~$(T_i)_{i \in \N}$ are iid exponential random variables with parameter two, the~$(S_i)_{i \in \N}$ are iid exponential random variables with parameter~$(1+\lambda)$ and the sum is interpreted as empty if $N = 0$.
 In particular,~$T$ is a hypoexponential random variable.
 Because we are interest in the case where the birth parameter~$\lambda$ is large~(and so typically~$\lambda > 1$), and we do not need very precise bounds, we can upper bound the time~$T$ to extinction by the sum~$\Tilde T_N$ of~$2N + 1$ independent exponential random variables with the same parameter one.
 Then, the random variable~$\Tilde T_N$ is a~$\text{Gamma} (2N + 1, 1)$.
 For a deterministic~$n$, one can use the following Chernoff bound for the tails of the Gamma distribution:
\begin{equation}
\label{eq:exp-decay-time-2}
\p_{\lambda, -\infty}^0 [\Tilde T_n \geq t] \leq \frac{e^{-\theta t}}{(1 - \theta)^{2n + 1}}.
\end{equation}
 Combining~\eqref{eq:exp-decay-time-1} and~\eqref{eq:exp-decay-time-2} implies that, for any~$\theta < 1$ and~$m \in \N$,
 $$ \p_{\lambda,- \infty}^0 [T \geq t] \leq \sum_{n = 1}^m \ \p [\Tilde T_n \geq t] + \p [N > m]
                                       \leq m \bigg(\frac{e^{-\theta t}}{(1 - \theta)^{2m + 1}} \bigg) + e^{- m \ln (1 + 1 / \lambda)}. $$
 Taking for example~$\theta = 1/2$ and~$m = \lfloor tr \rfloor$ with~$r < 1 / 4 \ln (2)$ gives the result. 
\end{proof}
\noindent
 Looking closely at the proof of the previous lemma, one easily deduces the following result, which also gives us an exponential decay of the radius of the invasion paths.
\begin{lemma}
\label{lem:exp-decay-space}
 Let~$\xi^0 \subset \Z^d$ be the (random) subset of vertices that are ever occupied.
 Then, for every birth rate~$\lambda \geq 0$, there exists a constant~$\delta = \delta (\lambda, d) > 0$ such that
 $$ \p_{\lambda, - \infty}^0 [|\xi^0| > n] \leq e^{- \delta n} \quad \hbox{for all} \quad n \in \N. $$
\end{lemma}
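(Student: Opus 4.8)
The plan is to read the bound off the generational structure already established in the proof of Lemma~\ref{lem:exp-decay-time}. Recall that, when $a = -\infty$, two adjacent players cannot give birth, so the only reproduction events occur during the single-player phases: whenever the unique surviving player at a site $x$ reproduces, its offspring is sent to one of the $2d$ neighbors of $x$, after which the process has an adjacent pair and must wait for a death before the next birth can occur. Since a site enters $\xi^0$ only by becoming occupied, and a site becomes occupied only through such a birth, each reproduction event enlarges $\xi^0$ by at most one site --- possibly by none, since the offspring may land on a site that was occupied earlier and has since become vacant (for instance the parent of $x$). Letting $N$ denote the number of single-player phases that end in a birth, which is exactly the quantity $N$ appearing in the proof of Lemma~\ref{lem:exp-decay-time}, I would then get the deterministic inequality $|\xi^0| \le 1 + N$, whence $\{|\xi^0| > n\} \subseteq \{N \ge n\}$ for every $n \in \N$.

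It remains to control $N$, and this was essentially done in the proof of Lemma~\ref{lem:exp-decay-time}: during each single-player phase the player reproduces with probability $\lambda/(1+\lambda)$ and dies with the complementary probability $1/(1+\lambda)$, independently of the past, so $N$ is a geometric random variable with $\p_{\lambda,-\infty}^0[N \ge n] = (\lambda/(1+\lambda))^n$ for all $n \in \N$. Combining this with the inclusion above yields
$$ \p_{\lambda,-\infty}^0 [|\xi^0| > n] \le \p_{\lambda,-\infty}^0 [N \ge n] = \left( \frac{\lambda}{1+\lambda} \right)^{\! n} = e^{-\delta n}, \qquad \delta := \ln\!\left(1 + \frac{1}{\lambda}\right), $$
which is strictly positive for every $\lambda \in (0, \infty)$; when $\lambda = 0$ the single player never reproduces and $|\xi^0| = 1$ almost surely, so the bound is trivial.

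I do not expect any genuine obstacle here, since the statement is little more than a corollary of Lemma~\ref{lem:exp-decay-time}. The only point that needs to be made carefully is the passage from ``number of reproduction events'' to ``number of distinct sites ever occupied'', i.e.\ the inequality $|\xi^0| \le 1 + N$, and in particular the observation that a birth may be directed at an already-visited (currently vacant) site; that slack is in the favorable direction and costs nothing.
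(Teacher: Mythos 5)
Your proof is correct and follows essentially the same route as the paper: both reduce the claim to the observation that each generation (single-player phase ending in a birth) adds at most one new site to $\xi^0$, so the tail of $|\xi^0|$ is controlled by the geometric tail of $N$ from the proof of Lemma~\ref{lem:exp-decay-time}, giving $\delta = \ln(1+1/\lambda)$. Your explicit handling of the $\lambda = 0$ case and of births landing on previously visited sites is a welcome bit of extra care, but it does not change the argument.
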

\begin{proof}
 Recall from the proof of Lemma~\ref{lem:exp-decay-time} that the number of generations~$N$ for which our process survives is a geometric random variable with parameter~$1/( 1 + \lambda)$.
 Since we can only visit at most one new site per generation, it follows from~\eqref{eq:exp-decay-time-1} that
 $$ \p_{\lambda, -\infty}^0 [|\xi^0| > n] \leq \p [N > n] = (1 + 1 / \lambda)^{-n} = e^{- n \ln (1 + 1 / \lambda)}, $$
 which is exactly the exponential decay we were looking for.
\end{proof}

% % % % % % % % % % % % % % % % % % % % % % % % % % % % % % % % % % % % % % % % % % % % % % % % % % % % % % % % % % % % % % % % % % % % % % % % % % % % % % % % % % % % % % % % % % % % % % % % % 

\noindent{\bf Comparison to non-interacting copies.}
 To use union bounds, it will be convenient to compare the process with arbitrary initial condition to a family of non-interacting copies starting with a single occupied site.
 For all~$A \subset \Z^d$, we let~$\xi_t^A$ denote the process starting from the set~$A$ occupied, and simply write~$\xi_t^z$ when~$A = \{z \}$.
 Having a collection~$\{(\xi^z_t)_{t \geq 0} : \ z \in A \}$ of non-interacting copies of our process, we let
 $$ \begin{array}{c} \Xi_t^A (x) = \sum_{z \in A} \xi_t^z (x) \quad \hbox{for all} \quad x \in \Z^d. \end{array} $$
 With this notation at hand, we are ready to state the following comparison result.
\begin{lemma}
\label{lem:non-interacting-comparison}
 There exists a coupling such that
 $$ \p_{\lambda, - \infty} [\xi^A_t (x) \leq \Xi^A_t (x) \ \forall (x, t) \in \Z^d \times \R_+] = 1. $$
\end{lemma}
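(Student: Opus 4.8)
The plan is to build an explicit coupling in which each occupied site of $\xi^A$ carries a label $z \in A$, recording its ancestor in $A$. Writing $C^z_t$ for the set of label-$z$ sites of $\xi^A_t$, the sets $C^z_t$ partition $\xi^A_t$ with $C^z_0 = \{z\}$, and each single-site copy $\xi^z$ will be constructed so that $C^z_t \subseteq \xi^z_t$ at all times. This invariant already gives the lemma: since each site carries at most one label, $\xi^A_t(x) = \sum_{z \in A} \ind\{x \in C^z_t\} \le \sum_{z \in A} \xi^z_t(x) = \Xi^A_t(x)$. So the whole proof is the construction of such a coupling plus the verification of the invariant.

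The coupling exploits the structure of the $a = -\infty$ dynamics already used in the proof of Lemma~\ref{lem:exp-decay-time}: a single-site copy carries at most two particles and never more than an adjacent pair, so it is a random walk that freezes while it is a pair and, while it is a singleton $\{y\}$, jumps to a uniformly chosen neighbour at rate $\lambda$. Death crosses are shared between $\xi^A$ and all the $\xi^z$: a cross at $x$ deletes $x$ from every process at once, which trivially preserves $C^z_t \subseteq \xi^z_t$. Births are coupled thus. If $\xi^z = \{y\}$ and its birth clock rings, $\xi^z$ jumps to a uniform neighbour $x$, and \emph{if in addition} $y \in C^z_t$ and $y$ is isolated in $\xi^A$, then the label-$z$ particle of $\xi^A$ at $y$ also gives birth onto $x$ (which is then empty in $\xi^A$). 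If instead $\xi^z = \{y, q\}$ is already a pair, then the label-$z$ particle of $\xi^A$ at $y$ carries a separate rate-$\lambda$ birth clock, active while $y$ is isolated in $\xi^A$, and when it rings that particle gives birth onto $q$ — empty in $\xi^A$ since $y$ is isolated there, and already in $\xi^z_t$ — so once more the new label-$z$ site of $\xi^A$ lies in $\xi^z$. Adding the two birth mechanisms shows the label-$z$ particle of $\xi^A$ at $y$ reproduces at total rate $\lambda$ exactly when $y$ is isolated in $\xi^A$, to a conditionally uniform neighbour, so that $\xi^A$ has the correct law and each $\xi^z$ is a genuine single-site $a = -\infty$ copy.

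The crux — and the point where the non-monotonicity stressed in the introduction intervenes — is the scenario in which $\xi^z = \{y\}$ reproduces while the label-$z$ particle of $\xi^A$ at $y$ is \emph{blocked} by an occupied neighbour in $\xi^A$: then $\xi^z$ acquires a \emph{phantom} site $q \notin \xi^A_t$ adjacent to $C^z_t$, which is exactly what must not happen for the argument above to keep running. The resolution I would use is that $q$ is then precisely the direction the label-$z$ particle of $\xi^A$ at $y$ will take once it becomes unblocked, and that a shared death cross at $q$ removes $q$ from $\xi^z$ as well and resynchronises the two processes; the statement to prove is that a phantom is never the site onto which $\xi^A$ must place a new label-$z$ particle, which follows by induction over the clock rings in a bounded space-time window, after a finite-speed-of-propagation localisation reducing to finitely many relevant copies $\xi^z$. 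Carrying out this induction, tracking the interplay of phantoms, blocked particles and the shared death marks, is the technical heart of the proof; the remainder is routine bookkeeping.
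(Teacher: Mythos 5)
You have correctly identified the delicate point in any label-based coupling here: the single-site copy $\xi^z$ can acquire a ``phantom'' site $q \in \xi^z_t \setminus \xi^A_t$ adjacent to the label-$z$ particle of $\xi^A$, and then a birth that is permitted in $\xi^A$ (where that particle is isolated) is blocked in $\xi^z$, threatening the containment $C^z_t \subseteq \xi^z_t$. The paper's own proof is far more direct: it runs both processes off a single graphical representation of type-$z$ arrows and type-$z$ crosses and disposes of the containment in one line by declaring the birth condition for $\xi^A$ (no occupied neighbours of any type, in $\xi^A$) ``more restrictive'' than that for the copy (no type-$z$ neighbours, in $\xi^z$). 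So your decomposition into labelled clusters $C^z$ matches the paper's, but your treatment of the births is genuinely different and is aimed precisely at the scenario that the paper's one-line justification does not discuss explicitly.

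As written, however, your argument has a genuine gap, and it sits exactly where you place ``the technical heart.'' First, the redirection device --- when $\xi^z = \{y, q\}$ with $q$ a phantom, the isolated label-$z$ particle of $\xi^A$ at $y$ is forced to give birth onto $q$ --- changes the conditional law of the birth direction, and the assertion that this direction is still ``conditionally uniform'' is the entire content of the claim that $\xi^A$ retains its correct marginal law; it is asserted, not verified. It is moreover delicate under your choice of shared death crosses: whether the phantom has survived at $q$ up to the birth time is then correlated, through the crosses at the neighbours of $y$, with the $\xi^A$-visible history of those sites, so conditioning on the past of $\xi^A$ can bias the location of $q$ away from uniform (type-indexed crosses, as in the paper's graphical representation, are the safer choice here). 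Second, your closing paragraph defers the propagation of the invariant to an unspecified induction whose stated goal --- that ``a phantom is never the site onto which $\xi^A$ must place a new label-$z$ particle'' --- contradicts the construction you just gave, in which the phantom is by design the designated birth target. Until the conditional-uniformity claim is proved and the case analysis of the three reachable states ($C^z = \xi^z$ a singleton, $C^z = \xi^z$ a pair, $C^z$ a singleton with exactly one phantom) is actually carried through, what you have is a plausible plan rather than a proof.
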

\noindent
 In particular, the set of sites occupied by the original process at time $t$ is always contained in the set of sites occupied by at least one of the non-interacting copies.
\begin{proof}
 To compare the two processes, we differentiate the players assuming that, for all~$z \in A$, site~$z$ is initially occupied by a type~$z$ player.
 Then, in the limiting case~$a = - \infty$, we can construct all the processes from the following graphical representation:
\begin{itemize}
 \item {\bf Births}.
       For each~$z \in A$, equip each~$\vec{xy}$, $x \sim y$, with a rate~$\lambda / 2d$ exponential clock.
       At the times~$t$ the clock rings, draw an arrow~$(x, t) \overset{z}{\longrightarrow} (y, t)$. \vspace*{4pt}
 \item {\bf Deaths}.
       For each~$z \in A$, equip each~$x$ with a rate one exponential clock.
       At the times~$t$ the clock rings, put a cross~$\times_z$ at the space-time point~$(x, t)$.
\end{itemize}
 The crosses have the same effects on both processes: a cross~$\times_z$ at site~$x$ kills a type~$z$ particle at that site.
 The arrows, however, have different effects.
\begin{itemize}
\item
 The process~$\xi_t^A$ is constructed by assuming that, if the tail~$x$ of a type~$z$ arrow is occupied by a type~$z$ player, and none of the neighbors of~$x$ is occupied, then the head~$y$ of the arrow becomes occupied by a type~$z$ player. \vspace*{4pt}
\item
 In contrast, the system~$\Xi^A_t$ of non-interacting copies is constructed by assuming that, if the tail of a type~$z$ arrow is occupied by a type~$z$ player, and none of the neighbors of site~$x$ is occupied by a type~$z$ player, then the head~$y$ of the arrow becomes occupied by a type~$z$ player.
\end{itemize}
 Because the condition for giving birth in the first process is more restrictive~(no players of any type in the neighborhood as opposed to no type~$z$ players), if there is a type~$z$ player at~$(x, t)$ in the first process then there is a type~$z$ player at~$(x, t)$ in the second process, which proves the lemma.
 Note that there is at most one type~$z$ player at each site in both processes, but sites can be occupied by multiple players with different types in the system of non-interacting copies.
\end{proof}

% % % % % % % % % % % % % % % % % % % % % % % % % % % % % % % % % % % % % % % % % % % % % % % % % % % % % % % % % % % % % % % % % % % % % % % % % % % % % % % % % % % % % % % % % % % % % % % % % 

\noindent{\bf Block construction.}
 Using the exponential decay~(in space and time) of the invasion paths and the previous stochastic domination, we can now use a block construction to prove extinction~(and more importantly control the rate of extinction) of the process.
 Let~$\Lat_d = \Z^d \times \N$, which we turn into a directed graph~$\vec{\Lat}_d$ by placing an edge
 $$ \begin{array}{l}
    (m, n) \to (m', n') \quad \hbox{if and only if} \vspace*{4pt} \\ \hspace*{35pt}
    |m_1 - m_1'| + \cdots + |m_d - m_d'| + |n - n'| = 1 \ \hbox{and} \ n \leq n'. \end{array} $$
 In other words, starting from each site~$(m, n)$, there are~$2d$ ``horizontal'' arrows that we can think of as potential invasions in space, and one ``vertical'' arrow that we can think of as a potential persistence in time.
 To rescale the interacting particle system in space and time, we let~$L$ be a large integer, and define the space-time blocks
 $$ \begin{array}{rcl}
      A_{m,n} & \n = \n & (2mL, nL) + [-2L, 2L]^d \times [0, 2L], \vspace*{4pt} \\
      B_{m,n} & \n = \n & (2mL, nL) + [-L, L]^d \times [L, 2L], \end{array} $$
 for all~$(m, n) \in \Lat_d$.
 See Figure~\ref{fig:block} for a picture.
 We call~$(m, n)$ a good site whenever
 $$ E_{m, n} = \{\hbox{the space-time block~$B_{m, n}$ is empty} \} \quad \hbox{occurs}. $$
 We now prove that, when~$a = - \infty$ and regardless of the configuration outside~$A_{m, n}$, the event~$E_{m, n}$ occurs with probability close to one when~$L$ is large.
\begin{figure}[t!]
\centering
\scalebox{0.34}{\input{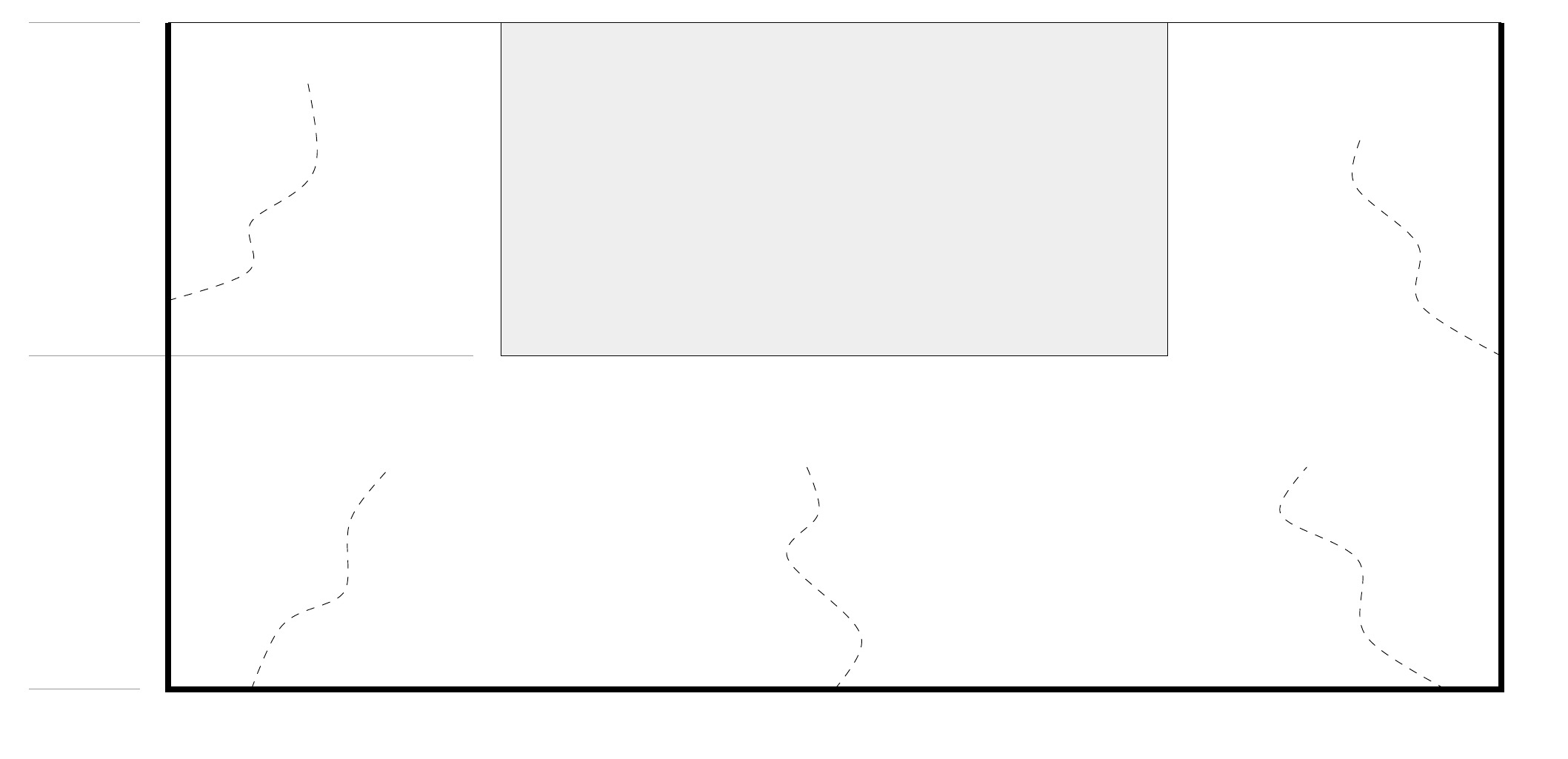_t}}
\caption{\upshape{
 Illustration of the event~$E_{m, n}$ when~$d = 1$.
 With high probability, the invasion paths starting from the bottom or the periphery of the big space-time block~$A_{m, n}$ do not reach the smaller space-time block~$B_{m, n}$.}}
\label{fig:block}
\end{figure}
\begin{proposition}
\label{prop:good-blocks}
 Let $\ep > 0$.
 Then, for~$L \in \N$ sufficiently large,
 $$ \p_{\lambda, - \infty} [E_{m, n}] \geq 1 - \ep / 2. $$
\end{proposition}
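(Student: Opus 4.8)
The plan is to exploit that, at $a = -\infty$, individuals never appear spontaneously: every individual present in the small block $B_{m,n}$ is joined by an invasion path --- a concatenation of birth arrows along which no death mark is encountered --- to a ``seed'' of one of two kinds. Writing $R_m := 2mL + [-2L,2L]^d$ for the spatial projection of $A_{m,n}$, one traces such a path backward in time from a point $(x,t) \in B_{m,n}$ and stops at the first of the following: reaching time $nL$ (necessarily at a site of $R_m$, a ``bottom'' seed), or crossing $\partial R_m$ inward, i.e.\ arriving at a point $(z,s)$ with $z$ on the inner boundary of $R_m$, $s \in [nL, nL+2L]$, produced by a birth arrow whose tail lies outside $R_m$ (a ``boundary'' seed). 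Since the path may leave and re-enter $R_m$, it is the \emph{last} inward crossing that is recorded; in all cases the invasion path from the seed to $(x,t)$ then stays inside $R_m$, so this dichotomy captures the only way the configuration outside $A_{m,n}$ can affect $B_{m,n}$. The first task is to make this reduction precise.

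The second ingredient is Lemma~\ref{lem:non-interacting-comparison}: the family descending in the true interacting process from a single seed is stochastically dominated by an independent one-particle copy started at that seed, so its space-time extent is governed by Lemmas~\ref{lem:exp-decay-time} and~\ref{lem:exp-decay-space}. It then suffices to bound, by a union bound, the probability that some seed's family reaches $B_{m,n}$.

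For bottom seeds there are at most $(4L+1)^d = |R_m|$ of them; since $B_{m,n}$ occupies the time interval $[nL+L, nL+2L]$, such a seed can contribute only if its family is alive at time $nL+L$, hence survives for time at least $L$, which by Lemma~\ref{lem:exp-decay-time} has probability at most $e^{-\beta L}$, so their total contribution is at most $(4L+1)^d e^{-\beta L}$. For boundary seeds, the number of birth arrows crossing $\partial R_m$ during $[nL, nL+2L]$ is Poisson with mean $O(L^d)$, since there are $O(L^{d-1})$ directed edges from $R_m^c$ into $R_m$, each carrying a clock of rate $\lambda/2d$ over a window of length $2L$; and a seed on the inner boundary of $R_m$ must travel $\ell^1$-distance at least $L$ to reach the spatial projection $2mL + [-L,L]^d$ of $B_{m,n}$, so its (connected) family must occupy more than $L$ sites, which by Lemma~\ref{lem:exp-decay-space} has probability at most $e^{-\delta L}$. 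After a Poisson tail estimate to control the random number of such seeds, a union bound bounds their contribution by $O(L^d e^{-\delta L})$. Adding the two estimates,
$$ \p_{\lambda, -\infty}[B_{m,n} \neq \varnothing] \ \leq \ (4L+1)^d e^{-\beta L} + O(L^d e^{-\delta L}), $$
which tends to $0$ as $L \to \infty$; choosing $L$ large makes the right-hand side at most $\ep/2$, proving the proposition.

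The main obstacle is the boundary influx. One must argue cleanly that the arbitrary external configuration acts on $B_{m,n}$ only through the (Poisson-many) birth arrows crossing $\partial R_m$; that each resulting family is dominated by its own independent one-particle copy, so the union bound over a random number of seeds is legitimate (conditioning on the crossing arrows, which are independent of the interior birth clocks driving the copies); and that invasion paths which exit and re-enter $R_m$ cause no trouble, which is exactly why only the last inward crossing is recorded. The bottom part, by contrast, is a routine consequence of Lemmas~\ref{lem:non-interacting-comparison} and~\ref{lem:exp-decay-time}.
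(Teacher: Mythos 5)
Your proposal is correct and follows essentially the same route as the paper: reduce to the two seed types (bottom of the block and lateral periphery), dominate each seed's family by an independent one-particle copy via Lemma~\ref{lem:non-interacting-comparison}, bound the bottom contribution by $(4L+1)^d e^{-\beta L}$ using Lemma~\ref{lem:exp-decay-time}, and control the Poisson-many boundary entries with a tail estimate before applying Lemma~\ref{lem:exp-decay-space} and a union bound. Your explicit bookkeeping of the last inward crossing is a slightly more careful articulation of a point the paper passes over quickly, but the decomposition, key lemmas, and final estimates coincide.
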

\begin{proof}
 In view of the stochastic domination in Lemma~\ref{lem:non-interacting-comparison}, it suffices to prove the result for the process that evolves according to~$\xi_t$ outside the space-time block~$A_{m, n}$ but according to~$\Xi_t$ inside the block.
 The idea is to use Lemma~\ref{lem:exp-decay-time} to prove that the invasion paths starting from the bottom of the block cannot live too long, and Lemma~\ref{lem:exp-decay-space} to prove that the paths starting from the periphery cannot go too far.
 More precisely, let
 $$ \begin{array}{rcl}
    \Lambda_- & \n = \n & \{(x, t) \in A_{m, n} : t = nL \} = \hbox{bottom of~$A_{m, n}$}, \vspace*{4pt} \\
    \Lambda_+ & \n = \n & \{(x, t) \in A_{m, n} : \norm{x - 2mL}_{\infty} = 2L \} = \hbox{periphery of~$A_{m, n}$}, \end{array} $$
 and write~$\Lambda_{\pm} \to B_{m, n}$ to indicate an invasion path going from~$\Lambda_{\pm}$ to the block~$B_{m, n}$.
 Because players cannot appear spontaneously, we have
\begin{equation}
\label{eq:good-blocks-0}
\p_{\lambda, - \infty} [E_{m, n}^c] \leq \p_{\lambda, - \infty} [\Lambda_- \to B_{m, n}] + \p_{\lambda, - \infty} [\Lambda_+ \to B_{m, n}].
\end{equation}
 Now, by the domination in Lemma~\ref{lem:non-interacting-comparison} and the exponential decay in Lemma~\ref{lem:exp-decay-time},
\begin{equation}
\label{eq:good-blocks-1}
\p_{\lambda, - \infty} [\Lambda_- \to B_{m, n}] \leq |\Lambda_-| \times \p_{\lambda, - \infty} [\xi_L^0 \neq \varnothing] \leq (4L + 1)^d \,e^{- \beta L}.
\end{equation}
 Dealing with the invasion paths coming in the block~$A_{m, n}$ through its periphery~$\Lambda_+$ is more complicated because the number of such paths is random.
 Note that the number of paths is dictated by the process~$\xi_t$ whereas their length is upper bounded by their counterpart for the process~$\Xi_t$.
 To deal with the number of invasion paths coming in through the periphery of the space-time block, we let
 $$ H_{m, n} = \{\hbox{there are fewer than~$2e \lambda |\Lambda_+$| birth events onto $\Lambda_+$ from outside~$A_{m, n}$} \}, $$
 where~$|\Lambda_+|$ denotes the $d$-dimensional area of the union of the~$2d$ facets of the~$(d + 1)$-dimensional space-time block~$A_{m, n}$ whose normal vectors are orthogonal to the time axis~(see Figure~\ref{fig:block} for an illustration in $(1 + 1)$ dimensions).
 Because the births in the contact process~$\xi_t$ occur at rate at most~$\lambda$, the number of births onto~$\Lambda_+$ is dominated by the Poisson random variable with parameter~$\lambda |\Lambda_+|$.
 % This was:  $$ \lambda |\Lambda_+| \leq 2d (4L + 1)^{d - 1} \times 2L \lambda.$$
 In particular, using the standard tail bound for the Poisson distribution, we get
 $$ \p_{\lambda, - \infty} [H_{m, n}^c] \leq 2 \exp (- \lambda |\Lambda_+| - 2e \lambda |\Lambda_+| ( \log (2) - 1)). $$
 This, together with Lemmas~\ref{lem:exp-decay-space} and~\ref{lem:non-interacting-comparison} and a union bound, implies that
\begin{equation}
\label{eq:good-blocks-2}
\begin{array}{l}
\p_{\lambda, - \infty} [\Lambda_+ \to B_{m,n}] \leq \p_{\lambda, - \infty} [\Lambda_+ \to B_{m, n} \,| \,H_{m, n}] + \p_{\lambda, - \infty} [H_{m, n}^c] \vspace*{4pt} \\ \hspace*{50pt}
                                               \leq  2e \lambda |\Lambda_+| \,\p_{\lambda, - \infty}^0 [|\xi^0| > L] + \p_{\lambda, - \infty} [H_{m, n}^c] \vspace*{4pt} \\ \hspace*{50pt}
                                               \leq  2e \lambda |\Lambda_+| \,e^{- \delta L} + 2 \exp (- \lambda |\Lambda_+| - 2e \lambda |\Lambda_+|(\log (2)-1)). \end{array}
\end{equation}
 Because~$|\Lambda_+| \leq 2d (4L + 1)^{d - 1} \times 2L$ we can choose~$L$ large to make both~\eqref{eq:good-blocks-1} and~\eqref{eq:good-blocks-2} arbitrarily small, hence plugging these estimates into~\eqref{eq:good-blocks-0} yields the claim.
\end{proof}

% % % % % % % % % % % % % % % % % % % % % % % % % % % % % % % % % % % % % % % % % % % % % % % % % % % % % % % % % % % % % % % % % % % % % % % % % % % % % % % % % % % % % % % % % % % % % % % % % 

\noindent{\bf Perturbation argument.}
 It follows from the block construction in the previous subsection that the set of good sites~$(m, n) \in \Lat_d$~(indicating that the block~$B_{m, n}$ is empty) dominates the set of open sites in the percolation process on~$\vec{\Lat}_d$ with parameter~$1 - \ep$.
 Because~$\ep$ can be chosen arbitrarily small, in which case the set of closed~(not open) sites does not percolate, and because players cannot appear spontaneously, this shows extinction of the process.
 To complete the proof of Theorem~\ref{th:extinction}, the last step is to transport this result from the limiting case~$a = - \infty$ to the case where the payoff coefficient~$a$ is large negative but finite.
 In the latter case, an individual with a neighbor can give birth, but it is unlikely, so~(the graphical representations of) the two processes in a space-time block should agree with high probability, as long as one chooses~$a$ sufficiently large negative, depending on the size of the block.
 Instead of working with the graphical representation in Lemma~\ref{lem:non-interacting-comparison}, which was designed to compare our process with the system of non-interacting copies, we construct the processes with~$a > - \infty$ and~$a = - \infty$ using the following graphical representation:
\begin{itemize}
 \item {\bf Births}.
       For~$i = 0, 1, \ldots, 2d$, equip each~$\vec{xy}$, $x \sim y$, with a rate~$\lambda e^{i a/2d}/ 2d$ exponential clock.
       At the times~$t$ the clock rings, draw an arrow~$(x, t) \overset{i}{\longrightarrow} (y, t)$. \vspace*{4pt}
 \item {\bf Deaths}.
       Equip each~$x$ with a rate one exponential clock.
       At the times~$t$ the clock rings, put a cross~$\times$ at the space-time point~$(x, t)$.
\end{itemize}
 The crosses have the same effect on both processes: a cross~$\times$ at site~$x$ kills a player at that site.
 The process with~$a > - \infty$ is constructed by assuming that, if the tail~$x$ of a type~$i$ arrow is occupied, the head~$y$ is empty, and exactly~$i$ of the neighbors of~$x$ are occupied, then the head~$y$ of the arrow becomes occupied.
 The process with~$a = - \infty$ is constructed similarly but using only the type~0 arrows, since the other arrows occur at rate zero in the limit.
 Using this graphical representation, we can now extend Proposition~\ref{prop:good-blocks} to the process with~$a > - \infty$ sufficiently large negative.
\begin{lemma}
\label{lem:good-blocks}
 Let $\ep > 0$.
 Then, for~$L \in \N$ as in Proposition~\ref{prop:good-blocks},
 $$ \p_{\lambda, a} [E_{m, n}] \geq 1 - \ep \quad \hbox{for all} \quad a > - \infty \quad \hbox{sufficiently large negative}. $$
\end{lemma}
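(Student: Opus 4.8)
The idea is to run both the process with payoff $a$ and the limiting process with $a=-\infty$ on the \emph{common} graphical representation introduced just above the lemma, the one equipped, for each $i=0,1,\dots,2d$, with type-$i$ birth arrows of rate $\lambda e^{ia/2d}/2d$. The limiting process uses only the type-$0$ arrows; the process with $a>-\infty$ uses them all, but a type-$i$ arrow triggers a birth only when the tail has exactly $i$ occupied neighbors. With $L$ fixed as in Proposition~\ref{prop:good-blocks}, I would introduce the bad event $F_{m,n}$ that at least one type-$i$ arrow with $1\le i\le 2d$ appears with tail within $\ell^{\infty}$-distance $2L+1$ of $2mL$ during the time interval $[nL,(n+2)L]$. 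Since there are at most $C(d)L^{d+1}$ such directed edges, $\p_{\lambda,a}[F_{m,n}]\le 1-\exp(-C'(d)L^{d+1}\lambda\sum_{i=1}^{2d}e^{ia/2d})$, and for \emph{fixed} $L$ this tends to $0$ as $a\to-\infty$; hence one can pick $a_-$ with $\p_{\lambda,a}[F_{m,n}]\le\ep/2$ for all $a<a_-$.

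\noindent The key point is that, on $F_{m,n}^{c}$, every birth occurring inside $A_{m,n}$ in the process with payoff $a$ is produced by a type-$0$ arrow, and a type-$0$ arrow acts in exactly the same way in both processes (the parent must have no occupied neighbor). Consequently, as long as an invasion path stays inside $A_{m,n}$, it evolves according to the $a=-\infty$ dynamics. I would then simply re-run the proof of Proposition~\ref{prop:good-blocks} on $F_{m,n}^{c}$: writing $\p_{\lambda,a}[E_{m,n}^{c}]\le\p_{\lambda,a}[F_{m,n}]+\p_{\lambda,a}[E_{m,n}^{c}\cap F_{m,n}^{c}]$ and splitting $E_{m,n}^{c}$ into an invasion path reaching $B_{m,n}$ from the bottom $\Lambda_-$ or through the periphery $\Lambda_+$ of $A_{m,n}$. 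For the bottom contribution, on $F_{m,n}^{c}$ the at most $(4L+1)^{d}$ paths started from $\Lambda_-$ are, while inside $A_{m,n}$, dominated by non-interacting $a=-\infty$ copies (Lemma~\ref{lem:non-interacting-comparison}), so each is extinct by time $nL+L$ with probability at least $1-e^{-\beta L}$ by Lemma~\ref{lem:exp-decay-time}, recovering \eqref{eq:good-blocks-1}. For the periphery contribution, I would use that $a<0$ forces the density-dependent birth rate to stay below $\lambda$, so the number of births onto $\Lambda_+$ from outside is still dominated by a Poisson variable of parameter $\lambda|\Lambda_+|$, the event $H_{m,n}$ still holds with probability close to one, and on $H_{m,n}\cap F_{m,n}^{c}$ each of the at most $2e\lambda|\Lambda_+|$ resulting paths has range at most $L$ with probability $1-e^{-\delta L}$ by Lemma~\ref{lem:exp-decay-space}, hence cannot reach $B_{m,n}$, which lies at $\ell^{\infty}$-distance $L$ from the periphery; this reproduces \eqref{eq:good-blocks-2}. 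Since $L$ is the one fixed in Proposition~\ref{prop:good-blocks}, these three terms sum to at most $\ep/2$, and adding $\p_{\lambda,a}[F_{m,n}]\le\ep/2$ gives $\p_{\lambda,a}[E_{m,n}]\ge 1-\ep$ for $a<a_-$.

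\noindent\textbf{Main obstacle.} The delicate step is the confinement claim: on $F_{m,n}^{c}$ an invasion path could in principle wander out of $A_{m,n}$ spatially and re-enter a region where type-$i$ arrows with $i\ge 1$ are again available, breaking the comparison with the $a=-\infty$ dynamics. This is exactly what the exponential space–time decay of Lemmas~\ref{lem:exp-decay-time} and~\ref{lem:exp-decay-space} is for: a path that behaves like the $a=-\infty$ process both dies within time $O(\log)$ and stays within $O(\log)$ of its starting point with overwhelming probability, so for $L$ large none of the relevant $\Lambda_\pm$-paths leaves $A_{m,n}$ before it would reach $B_{m,n}$, and the bootstrapping is self-consistent. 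A minor related point is making sure the definition of $F_{m,n}$ is spatially generous enough (the extra $+1$ in the radius) to catch every arrow whose head can lie inside $A_{m,n}$.
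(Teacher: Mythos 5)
Your proposal is correct and follows essentially the same route as the paper: both couple the two processes on the multi-type-arrow graphical representation, observe that the total rate of type-$i$ arrows with $i\geq 1$ landing in the (slightly enlarged) block $A_{m,n}$ is Poisson with parameter of order $L^{d+1}\lambda e^{a/2d}$, which vanishes as $a\to-\infty$ for fixed $L$, and conclude that on the high-probability event that no such arrow appears the dynamics in the block coincide with the $a=-\infty$ dynamics, so Proposition~\ref{prop:good-blocks} applies. The only cosmetic difference is that you re-run the estimates of Proposition~\ref{prop:good-blocks} via a union bound on $F_{m,n}^{c}$, whereas the paper exploits the independence of the type-$0$ arrows and crosses from the type-$i$ arrows ($i\geq 1$) to simply multiply $\p_{\lambda,-\infty}[E_{m,n}]$ by $\p[X=0]$; your worry about paths exiting and re-entering $A_{m,n}$ is already absorbed by the periphery count $H_{m,n}$, since any re-entry is a fresh birth onto $\Lambda_+$.
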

\begin{proof}
 In view of Proposition~\ref{prop:good-blocks}, it suffices to show that, with probability arbitrarily close to one, the process with~$a = - \infty$ and the process with~$a > - \infty$ large negative agree in the block~$A_{m, n}$, i.e., there are no type~$i$ arrows for~$i \neq 0$ in the block, which prevents players with at least one neighbor from giving birth.
 The overall rate of all the type~$i$ arrows for~$i \neq 0$ starting at a given site is
 $$ \lambda e^{a / 2d} + \lambda e^{2a / 2d} + \lambda e^{3a / 2d} + \cdots + \lambda e^{2da / 2d} \leq 2 \lambda e^{a / 2d} $$
 for all~$a > - \infty$ large negative, therefore the number of such arrows that point at the space-time block~$A_{m, n}$ is dominated by a Poisson random variable~$X$ with parameter
 $$ |A_{m, n}| \times 2 \lambda e^{a / 2d} = 2L (4L + 1)^d \times 2 \lambda e^{a / 2d}. $$
 Now, the scale parameter~$L \in \N$ being fixed so that Proposition~\ref{prop:good-blocks} holds, we can choose~$a_- > - \infty$ sufficiently large negative so that
\begin{equation}
\label{eq:match}
\p [X= 0] \geq 1 -\ep/2 \quad \hbox{for all} \quad a \leq a_-.
\end{equation}
 Finally, because the event~$E_{m, n}$ occurs for the process with~$a \leq a_-$ whenever it occurs for the process with~$a = - \infty$ and the two processes agree in the space-time block, it follows from~\eqref{eq:match} and Proposition~\ref{prop:good-blocks} that
 $$ 
    \p_{\lambda, a} [E_{m, n}]  \geq  \p_{\lambda, - \infty} [E_{m, n}] \times \p [X = 0]  \geq  (1 - \ep / 2)^2 \geq 1 - \ep
$$
 for all~$a \leq a_-$.
 This completes the proof.
\end{proof}
\noindent
 Using Lemma~\ref{lem:good-blocks} and an idea from~\cite{berg_grimmett_schinazi_1998}, we can now conclude the proof.
\begin{proof}[Proof of Theorem \ref{th:extinction}]
 It follows from Lemma~\ref{lem:good-blocks}~(and its proof) that there exists a collection of good events~$G_{m, n}$ that only depend on the graphical representation in the slightly enlarged space-time blocks
 $$ A^+_{m, n} = (2mL, nL) + [- 2L - 1, 2L + 1]^d \times  [0, 2L], \quad (m, n) \in \Lat_d, $$
 such that, for every~$\ep > 0$, we can choose the scale parameter~$L \in \N$ large, then the payoff coefficient~$a > - \infty$ large negative, to guarantee  that
 $$ \p_{\lambda, a} [G_{m, n}] \geq 1 - \ep \quad \hbox{and} \quad G_{m, n} \subset E_{m, n}. $$
 This implies that the set of good sites dominates stochastically the set of open sites in the oriented site percolation process on~$\vec \Lat_d$ with parameter $1 - \ep$.
 If~$\ep > 0$ is small enough, not only the set of open sites percolates, but also the set of closed sites does not percolate and the probability of a path of closed sites with length at least~$n$ starting from~$(0, 0)$ decays exponentially with~$n$ (see~\cite[Section 8]{berg_grimmett_schinazi_1998} for a proof).
 Because the players cannot appear spontaneously, the presence of a player in a space-time block~$B_{m, n}$ implies the existence of a path of closed sites to~$(m, n)$, which shows extinction.
\end{proof}

%%%%%%%%%%%%%%%%%%%%%%%%%%%%%%%%%%%%%%%%%%%%%%%%%%%%%%%%%%%%%%%%%%%%%%%%%%%%%%%%%%%%%%%%%%%%%%%%%%%%%%%%%%%%%%%%%%%%%%%%%%%%%%%%%%%%%%%%%%%%%%%%%%%%%%%%%%%%%%%%%%%%%%%%%%%%%%%%%%%%%%%%%%%%%%%%%%

\noindent {\bf Acknowledgments}.
 The authors would like to thank two anonymous referees for their careful reading of a preliminary version of this paper and for providing helpful comments, especially for the study of the bistable region of the mean-field model.

 JK thanks the School of Mathematical and Statistical Sciences at Arizona State University for its hospitality and acknowledges the financial support of the Leibniz Association within the \textit{Leibniz Junior Research Group on Probabilistic Methods for Dynamic Communication Networks} as part of the Leibniz Competition and the German Research Foundation under Germany’s Excellence Strategy \textit{MATH+: The Berlin Mathematics Research Center}.

%%%%%%%%%%%%%%%%%%%%%%%%%%%%%%%%%%%%%%%%%%%%%%%%%%%%%%%%%%%%%%%%%%%%%%%%%%%%%%%%%%%%%%%%%%%%%%%%%%%%%%%%%%%%%%%%%%%%%%%%%%%%%%%%%%%%%%%%%%%%%%%%%%%%%%%%%%%%%%%%%%%%%%%%%%%%%%%%%%%%%%%%%%%%%%%%%%

\end{document}